\newtheorem{Theorem}{Theorem}[section]
\newtheorem{Corollary}[Theorem]{Corollary}
\newtheorem{Proposition}[Theorem]{Proposition}
\numberwithin{equation}{section}
\def\C {\mathbb C}
\def\R {\mathbb R}
\def\Z {\mathbb Z}
\newcommand{\<}{\langle}
\renewcommand{\>}{\rangle}
\renewcommand{\(}{\left(}
\renewcommand{\)}{\right)}
\newcommand{\lesim}{\lesssim}
\renewcommand{\Re}{\operatorname{Re}}
\renewcommand{\Im}{\operatorname{Im}}
\newcommand{\supp}{\operatorname{supp}}
\newcommand{\p}{\partial}
\newcommand{\id}{\operatorname{Id}}
\begin{document}
\title[Inverse problems for polyharmonic operator]{Inverse problems for the perturbed polyharmonic operator with coefficients in Sobolev spaces with non-positive order}

\author[Yernat M. Assylbekov]{Yernat M. Assylbekov}
\address{Department of Mathematics, University of Washington, Seattle, WA 98195-4350, USA}
\email{y\_assylbekov@yahoo.com}

\maketitle

\begin{abstract}
We show that the knowledge of the Dirichlet-to-Neumann map on the boundary of a bounded open set in $\R^n$, $n\ge 3$, for the perturbed polyharmonic operator $(-\Delta)^m+A\cdot D+q$, $m\ge 2$, with $n>m$, $A\in W^{-\frac{m-2}{2},\frac{2n}{m}}$ and $q\in W^{-\frac{m}{2}+\delta,\frac{2n}{m}}$, with $0<\delta<1/2$, determines the potentials $A$ and $q$ in the set uniquely. The proof is based on a Carleman estimate with linear weights and with a gain of two derivatives and on the property of products of functions in Sobolev spaces.
\end{abstract}

\section{Introduction}
Let $\Omega\subset \R^n$, $n\ge 3$, be a bounded open set with $C^\infty$ boundary. Consider the  polyharmonic operator $(-\Delta)^m$, where $m\ge 1$ is an integer. The operator $(-\Delta)^m$ is positive and self-adjoint on $L^2(\Omega)$ with domain $H^{2m}(\Omega)\cap H^m_0(\Omega)$, where
$$
H^m_0(\Omega)=\{ 
u\in H^m(\Omega): \gamma u=0\}.
$$
This operator can be obtained as the Friedrichs extension starting from the space of test functions. This fact can be found, for example, in \cite{Gr}. Here and in what follows, $\gamma$ is the Dirichlet trace operator
$$
\gamma:H^m(\Omega)\to \prod_{j=0}^{m-1}H^{m-j-1/2}(\p \Omega),\quad \gamma u=(u|_{\p\Omega},\p_\nu u|_{\p\Omega},\dots,\p_\nu^{m-1}u|_{\p\Omega}),
$$
where $\nu$ is the unit outer normal to the boundary $\p \Omega$, and $H^s(\Omega)$ and $H^s(\p\Omega)$ are the standard $L^2$-based Sobolev spaces on $\Omega$ and its boundary $\p\Omega$ for $s\in\R$.
\medskip

Throughout the paper we shall assume that $n>m$.
\medskip

Let $A\in W^{-\frac{m-2}{2},\frac{2n}{m}}(\R^n,\C^n)\cap \mathcal E'(\overline{\Omega},\C^n)$ and $q\in W^{-\frac{m}{2},\frac{2n}{m}}(\R^n,\C)\cap \mathcal E'(\overline{\Omega},\C)$, where $\mathcal E'(\overline{\Omega})=\{u\in \mathcal D'(\R^n):\supp (u)\subseteq\overline \Omega\}$ and $W^{s,p}(\R^n)$ is the standard $L^p$-based Sobolev space on $\R^n$, $s\in\R$ and $1<p<\infty$, which is defined by the Bessel potential operator. Thus $W^{s,p}(\R^n)$ is the space of all distributions $u$ on $\R^n$ such that $J_{-s}u\in L^p(\R^n)$, where $J_s$ is the operator defined as
$$
u\mapsto ((1+|\cdot|^2)^{-s/2}\widehat u(\cdot))^\vee.
$$
In the case $s\ge 0$ integer, $W^{s,p}(\R^n)$ coincides with the space of all functions whose all derivatives of order less or equal to $s$ is in $L^p(\R^n)$. The reader is referred to \cite{Trieb} for properties of these spaces.
\medskip

Before stating the problem, we consider the bilinear forms $B_A$ and $b_q$ on $H^m(\Omega)$ which are defined by
$$
B_A(u,v)=\<A,\tilde v\,D\tilde u\>_{\R^n},\quad b_q(u,v)=\<q,\tilde u\,\tilde v\>_{\R^n},\quad u,v\in H^m(\Omega),
$$
where $\<\cdot,\cdot\>_{\R^n}$ is the distributional duality on $\R^n$, and $\tilde u,\tilde v \in H^m(\R^n)$ are extensions of $u$ and $v$, respectively. In Appendix~\ref{appendix a}, we show that these definitions are well-defined (i.e. independent of the choice of extensions $\tilde u,\tilde v$). Using a property on multiplication of functions in Sobolev spaces, we show that the forms $B_A$ and $b_q$ are bounded on $H^m(\Omega)$; see Proposition~\ref{m_q and D_A are bounded maps on Omega}.
\medskip

Consider the operator $D_A$ which is formally $A\cdot D$, where $D_j=-i\p_{x_j}$, and the operator $m_q$ of multiplication by $q$. To be precise, for $u\in H^m(\Omega)$, $D_A(u)$ and $m_q(u)$ are defined as
$$
\<D_A(u),\psi\>_\Omega=B_A(u,\psi)\text{ and }\<m_q(u),\psi\>_\Omega=b_q(u,\psi),\quad \psi\in C^\infty_0(\Omega).
$$
Then the operators $D_A$ and $m_q$ are bounded $H^m(\Omega)\to H^{-m}(\Omega)$ (see Corollary~\ref{m_q and D_A are well-defined on HOmega}), and hence, standard arguments show that the operator
$$
\mathcal L_{A,q}=(-\Delta)^m+D_A+m_q:H^m_0(\Omega)\to H^{-m}(\Omega)=(H^m_0(\Omega))',
$$
is Fredholm operator with zero index; see Appendix~\ref{appendix b}.
\medskip


For $f=(f_0,\dots,f_{m-1})\in \prod_{j=0}^{m-1}H^{m-j-1/2}(\p\Omega)$, consider the Dirichlet problem
\begin{equation}\label{DP}
\begin{aligned}
\mathcal L_{A,q}u&=0\quad\text{in}\quad\Omega,\\
\gamma u&=f\quad\text{on}\quad\p\Omega.
\end{aligned}
\end{equation}
If $0$ is not in the spectrum of $\mathcal L_{A,q}$, the Dirichlet problem \eqref{DP} has a unique solution $u\in H^m(\Omega)$. We define the Dirichlet-to-Neumann map $\mathcal N_{A,q}$ as follows
\begin{equation}\label{N_A,q}
\<\mathcal N_{A,q}f,\overline h\>_{\p\Omega}:=\sum_{|\alpha|=m}\frac{m!}{\alpha!}(D^\alpha u,\overline{D^\alpha v}_h)_{L^2(\Omega)}+B_A(u,\overline v_h)+b_q(u,\overline v_h),
\end{equation}
where $h=(h_0,\dots,h_{m-1})\in \prod_{j=0}^{m-1}H^{m-j-1/2}(\p\Omega)$, and $v_h\in H^m(\Omega)$ is an extension of $h$, that is $\gamma v_h=h$.
It is shown in Appendix~\ref{appendix b} that $\mathcal N_{A,q}$ is a well-defined (i.e. independent of the choice of $v_h$) bounded operator
$$
\mathcal N_{A,q}:\prod_{j=0}^{m-1}H^{m-j-1/2}(\p\Omega)\to\(\prod_{j=0}^{m-1}H^{m-j-1/2}(\p\Omega)\)'=\prod_{j=0}^{m-1}H^{-m+j+1/2}(\p\Omega).
$$
The inverse boundary problem for the perturbed polyharmonic operator $\mathcal L_{A,q}$ is to determine $A$ and $q$ in $\Omega$ from the knowledge of the Dirichlet-to-Neumann map $\mathcal N_{A,q}$.\medskip 

When $m=1$ the operator $L_{A,q}$ is the first order perturbation of the Laplacian and $N_{A,q}u$ is formally given by $N_{A,q}f=(\p_\nu u+i(A\cdot \nu)u)|_{\p\Omega}$, where $u$ is an $H^m(\Omega)$ solution to the equation $L_{A,q}u=0$. It was shown in \cite{Sun} that in this case there is an obstruction to uniqueness in this problem given by the following gauge equivalence of the set of the Cauchy data: if $\psi\in W^{1,\infty}$ in a neighbourhood of $\overline\Omega$ and $\psi|_{\p\Omega}=0$, then $C_{A,q}=C_{A+\nabla \psi,q}$; see also \cite[Lemma~3.1]{KU}. Hence, given $C_{A,q}$, we may only hope to recover the magnetic field $dA$ and electric potential $q$. Here and in what follows the magnetic field $dA$ is defined by
$$
dA=\sum_{1\le j,k,\le n}\(\p_{x_j}A_k-\p_{x_k}A_j\)dx_j\wedge dx_k.
$$
Due to the lack smoothness of $A$, this definition is in the sense of distributions.
\medskip

Starting with the paper of Sun \cite{Sun}, inverse boundary value problems for the magnetic Schr\"odinger operators have been extensively studied. It was shown in \cite{Sun} that the hope mentioned above is justified provided that $A\in W^{2,\infty}$, $q\in L^\infty$ and $dA$ satisfies a smallness condition. The smallness condition was removed in \cite{NSU} for $C^\infty$ magnetic and electric potentials, and also for compactly supported $C^2$ magnetic potentials and essentially bounded electric potentials. The regularity assumption on magnetic potentials were subsequently weakened to $C^1$ in  \cite{Tol}, and then to Dini continuous in \cite{S}. All these results were obtained under the assumption that zero is not a Dirichlet eigenvalue for the magnetic Schr\"odinger operator in $\Omega$. There are two best result by now. One is due to Krupchyk and Uhlmann \cite{KU}, where they prove uniqueness under the assumption that magnetic and electric potentials are of class $L^\infty$. Another is due to Haberman \cite{Hab2} in dimension $n=3$, where the uniqueness is shown for the case when $q\in W^{-1,3}$ and $A\in W^{s,3}$ for some $s>0$ with certain smallness condition.
\medskip

It was shown in \cite{KLU} that the obstruction to uniqueness coming from the gauge equivalence when $m=1$ can be eliminated by considering operators of higher order. More precisely, they show that for $m\ge 2$ the set of Cauchy data $C_{A,q}$ determines $A$ and $q$ uniquely provided that $A\in W^{1,\infty}(\Omega,\C^n)\cap\mathcal E'(\overline\Omega,\C^n)$ and $q\in L^\infty(\Omega)$. They also show that the uniqueness result holds without the assumption $A=0$ on $\p\Omega$ but for $C^\infty$ magnetic and electric potentials. This is also true for $A\in W^{1,\infty}(\Omega,\C^n)$ and $q\in L^\infty(\Omega)$ when the boundary of the domain $\Omega$ is connected.
\medskip

The purpose of this paper is to relax the regularity assumption on $A$ from $W^{1,\infty}$ to $W^{-\frac{m-2}{2},\frac{2n}{m}}$ class and $q$ from $L^\infty$ to $W^{-\frac{m}{2}+\delta,\frac{2n}{m}}$, $0<\delta<1/2$, for the perturbed polyharmonic operator $\mathcal L_{A,q}$ with $m\ge2$. Therefore, throughout the paper we assume that $m\ge2$. Our main result is as follows.
\begin{Theorem}\label{main th}
Let $\Omega\subset \R^n$, $n\ge 3$, be a bounded open set with $C^\infty$ boundary, and let $m\ge 2$ be an integer such that $n>m$. Suppose that $A_1,A_2\in W^{-\frac{m-2}{2},\frac{2n}{m}}(\R^n,\C^n)\cap\mathcal E'(\overline \Omega)$ and $q_1,q_2\in W^{-\frac{m}{2}+\delta,\frac{2n}{m}}(\R^n,\C)\cap\mathcal E'(\overline \Omega)$, for some $0<\delta<1/2$, are such that $0$ is not in the spectrums of $\mathcal L_{A_1,q_1}$ and $\mathcal L_{A_2,q_2}$. If $\mathcal N_{A_1,q_1}=\mathcal N_{A_2,q_2}$, then $A_1=A_2$ and $q_1=q_2$.
\end{Theorem}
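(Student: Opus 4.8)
The plan is to follow the standard two-step route for such inverse boundary value problems: first reduce the equality $\mathcal N_{A_1,q_1}=\mathcal N_{A_2,q_2}$ to an integral identity, and then insert complex geometric optics (CGO) solutions into that identity to recover the Fourier transforms of $A_1-A_2$ and $q_1-q_2$. After extending $A_j,q_j$ by zero to $\R^n$, the weak formulation \eqref{N_A,q} of the Dirichlet-to-Neumann map, the unique solvability of \eqref{DP} under the spectral hypothesis, and the boundedness of $B_A,b_q$ on $H^m(\Omega)$ (Proposition~\ref{m_q and D_A are bounded maps on Omega}) give, by the usual computation with matching Dirichlet data (the difference of the two interior solutions lying in $H^m_0(\Omega)$), the identity
\[
B_{A_1-A_2}(u,\overline v)+b_{q_1-q_2}(u,\overline v)=0
\]
for every $u\in H^m(\Omega)$ solving $\mathcal L_{A_2,q_2}u=0$ and every $v\in H^m(\Omega)$ solving the formal transpose $\mathcal L_{A_1,q_1}^{*}v=0$ (this transposed operator, whose zeroth-order term picks up $\operatorname{div}A_1$ — a well-defined distribution — is again a perturbed polyharmonic operator of the admissible type).

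The main work is the construction of the CGO solutions. For $\zeta\in\C^n$ with $\zeta\cdot\zeta=0$ and $|\zeta|$ large, one seeks $u=e^{x\cdot\zeta}(a_\zeta+r_\zeta)$; since $\zeta\cdot\zeta=0$ one has $e^{-x\cdot\zeta}(-\Delta)^m(e^{x\cdot\zeta}w)=(-\Delta-2\zeta\cdot\nabla)^mw$ and $e^{-x\cdot\zeta}(A\cdot D)(e^{x\cdot\zeta}w)=A\cdot(D-i\zeta)w$, so the remainder must solve
\[
(-\Delta-2\zeta\cdot\nabla)^m r_\zeta=-A\cdot(D-i\zeta)(a_\zeta+r_\zeta)-q\,(a_\zeta+r_\zeta)-(-\Delta-2\zeta\cdot\nabla)^m a_\zeta ,
\]
the amplitude $a_\zeta$ being a finite sum of terms obtained by solving transport equations along the complex direction $\zeta$ — solved by integrating rather than differentiating, which is exactly what tolerates the negative smoothness of $A$ — and chosen so that the inhomogeneity at $r_\zeta=0$ lies in a Sobolev space on which the Carleman solution operator has room to work. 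The Carleman estimate with linear weight $\varphi(x)=x\cdot e$ and a gain of two derivatives furnishes a right inverse $G_\zeta$ of $(-\Delta-2\zeta\cdot\nabla)^m$ that gains two Sobolev derivatives and whose norm is a negative power of $|\zeta|$ (small because we invert the $m$-th power, $m\ge2$); this makes $r_\zeta\mapsto-G_\zeta(\cdots)$ a contraction on a small ball of the relevant $|\zeta|$-dependent Sobolev space, producing $u\in H^m(\Omega)$ with $r_\zeta$ small as $|\zeta|\to\infty$. The two indispensable tools here are the multiplication property of Sobolev spaces — used to control $A\cdot Dr_\zeta$, $(A\cdot\zeta)r_\zeta$ and $q\,r_\zeta$ in negative-order spaces, and where the indices $-\tfrac{m-2}{2}$, $-\tfrac m2+\delta$, $\tfrac{2n}{m}$ and the hypothesis $n>m$ enter ($\delta>0$ supplying the extra room for $q$) — and the two-derivative gain, needed so that $G_\zeta$ both smooths the inhomogeneity sufficiently and decays fast enough in $|\zeta|$ to absorb the amplification $|\zeta|$ produced by $A\cdot(-i\zeta)$. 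The transposed equation is treated in the same way.

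Next I would substitute $u=e^{x\cdot\zeta_2}(a_{\zeta_2}+r_{\zeta_2})$ and $v=e^{x\cdot\zeta_1}(a_{\zeta_1}+r_{\zeta_1})$ into the identity, with $\zeta_1,\zeta_2$ chosen (after a rotation) so that $e^{x\cdot(\zeta_2+\overline{\zeta_1})}=e^{-ix\cdot\xi}$ and $|\zeta_1|\sim|\zeta_2|=s\to\infty$ along a one-parameter family, for $\xi$ in a two-parameter family of frequencies, and then expand $\overline v\,Du$ in powers of $s$, the remainders being negligible through the relevant orders because for $m\ge2$ the inverse of the $m$-th power decays fast enough in $|\zeta|$. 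The leading $O(s)$ coefficient forces $\widehat{d(A_1-A_2)}(\xi)=0$; the next coefficient — which for $m=1$ would be contaminated by the remainders but is now accessible — then forces $\widehat{A_1-A_2}\equiv0$ (after using $d(A_1-A_2)=0$ and varying both the family and $\xi$), hence $A_1=A_2$. Putting $A_1=A_2$ back reduces the identity to $b_{q_1-q_2}(u,\overline v)=0$, and one more CGO test with the common potential gives $\widehat{q_1-q_2}\equiv0$, hence $q_1=q_2$.

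I expect the principal obstacle to be the remainder estimate in the CGO construction. Since the perturbation $A\cdot D+q$ has coefficients of negative order, the right-hand side of the remainder equation must be measured in negative-order Sobolev spaces through the multiplication property, while the factor $|\zeta|$ carried by $A\cdot(-i\zeta)$ has to be beaten by the gain and the decay of $G_\zeta$; reconciling these two demands is what fixes both the precise form of the Sobolev product estimates (hence the exact exponents and the hypothesis $n>m$) and the need for a gain of two derivatives in the linear-weight Carleman estimate, and it also dictates the shape of the amplitude $a_\zeta$. Proving that Carleman estimate, and verifying that the integral identity survives the passage to the CGO solutions — which sit just at the borderline where $B_A$ and $b_q$ remain meaningful, the slack being exactly what $\delta>0$ supplies — are the other points that will require care.
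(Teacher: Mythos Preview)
Your overall route is correct --- integral identity plus CGO solutions --- and your handling of the CGO construction via Carleman estimates is close to the paper's, though the paper uses Hahn--Banach rather than a contraction argument, and takes a much simpler, $A$-independent amplitude solving only the second-order transport equation $(\zeta_0\cdot\nabla)^2a=0$; since $m\ge 2$, the $m$-fold iteration of the Carleman estimate already decays fast enough in $|\zeta|$ to swallow the factor $|\zeta|$ from $A\cdot(-i\zeta)$ without building $A$ into the amplitude.

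The genuine gap is in the passage from $d(A_1-A_2)=0$ to $A_1=A_2$. Your plan is to read off $A_1-A_2$ from the \emph{next} coefficient in the $s=|\zeta|$-expansion of the identity, but this coefficient is contaminated by the remainders: with $\|r_j\|_{H^m_{\rm scl}}=O(1)$, the cross term $\tfrac{1}{h}\,b_{(\mu_1+i\mu_2)\cdot(A_1-A_2)}\bigl(h^{m/2}r_1,\,e^{ix\cdot\xi}\overline a_2\bigr)$ is only $O(1)$, not $o(1)$, for every $m\ge 2$ --- using the regularity $A\in W^{-(m-2)/2,2n/m}$ and the Sobolev product estimate one gains exactly $h^{m/2-1}\cdot h^{-(m-2)/2}=h^0$. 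So the subleading level cannot be isolated, and no amount of ``varying the family and $\xi$'' fixes this.

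What the paper does instead never leaves the \emph{leading}-order relation $\bigl\langle(\mu_1+i\mu_2)\cdot(A_1-A_2),\, a_1\,\overline a_2\,e^{ix\cdot\xi}\bigr\rangle=0$. It first enlarges $\Omega$ to a ball $B\supset\!\supset\Omega$ (showing the Dirichlet-to-Neumann maps still agree there), so that $A_1-A_2$ is compactly supported inside $B$; then $d(A_1-A_2)=0$ on $\R^n$ and the Poincar\'e lemma for currents give $A_1-A_2=\nabla\psi$ with $\psi\in\mathcal E'(B)$. Now comes the step specific to $m\ge 2$: because the amplitude need only satisfy $(\zeta_0\cdot\nabla)^2a=0$, one may choose $a_1$ with $(\mu_1+i\mu_2)\cdot\nabla a_1\equiv 1$ (and $a_2\equiv 1$). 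Plugging into the leading-order identity and integrating by parts --- legitimate since $\psi$ is compactly supported in $B$, which is precisely why the enlargement was needed --- yields $\widehat\psi(\xi)=0$ for all $\xi$, hence $A_1=A_2$. This freedom in the amplitude, not a higher-order expansion, is what distinguishes $m\ge 2$ from $m=1$.
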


The assumption $n>m$ is related to the dual space of $W^{s,\frac{2n}{m}}$ and the estimate on products of functions in different Sobolev spaces. It seems to the author that the techniques of the present paper can be adopted to the case $n\le m$ by changing regularity assumptions. We hope to consider this problem in future work.
\medskip

The key ingredient in the proof of Theorem \ref{main th} is a construction of  complex geometric optics solutions for the operator $A\in W^{-\frac{m-2}{2},\frac{2n}{m}}(\R^n,\C^n)\cap \mathcal E'(\overline{\Omega},\C^n)$ and $q\in W^{-\frac{m}{2},\frac{2n}{m}}(\R^n,\C)\cap \mathcal E'(\overline{\Omega},\C)$. For this, we use the method of Carleman estimates which is based on the corresponding Carleman estimate for the Laplacian, with a gain of two derivatives, due to Salo and Tzou \cite{STz}. Another important tool in our proof is the property of products of functions in Sobolev spaces \cite{RS}. This was used in the paper of Brown and Torres \cite{BrT}. The idea of constructing such solutions for the Schr\"odinger operator goes back to the fundamental paper due to Sylvester and Uhlmann \cite{SyU}. Such solutions were first introduced in \cite{Faddeev} in the setting of quantum inverse scattering problem.
\medskip

The inverse boundary value problem of the recovery of a zeroth order perturbation of the biharmonic operator, that is when $m=2$, has been studied by Isakov \cite{Isakov}, where uniqueness result was obtained, similarly to the case of the Schr\"odinger operator. In \cite{Ikeh}, the uniqueness result was extended to $q\in L^{n/2}(\Omega)$, $n>4$ by Ikehata. These results were extended for zeroth order perturbation of the polyharmonic operator with $q\in L^{n/2m}$, $n>2m$ by Krupchyk and Uhlmann \cite{KU2}. In the case $m=1$, that is for zeroth order perturbation of the Schr\"odinger operator, global uniqueness result was established by Lavine and Nachman \cite{LN} for $q\in L^{n/2}(\Omega)$, following an earlier result of Chanillo \cite{Ch} for $q\in L^{n/2+\varepsilon}(\Omega)$, $\varepsilon > 0$ and Novikov \cite{N} for $q\in L^\infty(\Omega)$.
\medskip

Higher ordered polyharmonic operators occur in the areas of physics and geometry such as the study of the Kirchhoff plate equation in the theory of elasticity, and the study of the Paneitz-Branson operator in conformal geometry; for more details see monograph \cite{GGS}.
\medskip

We would like to remark that the problem considered in this paper can be considered as generalization of the Calder\'on's inverse conductivity problem \cite{Cal}, known also as electrical impedance tomography, for which the reduction of regularity have been studied extensively. In the fundamental paper by Sylvester and Uhlmann \cite{SyU} it was shown that $C^2$ conductivities can be uniquely determined from boundary measurements. The regularity assumptions were weakened to $C^{3/2 + \varepsilon}$ conductivities by Brown \cite{Br}, and corresponding result for $C^{3/2}$ conductivities was obtained by P\"aiv\"arinta, Panchenko and Uhlmann \cite{PPU}. Uniqueness result for $C^{1+\varepsilon}$ conormal conductivities was shown by Greenleaf, Lassas and Uhlmann \cite{GLU}. There is a recent work by Haberman and Tataru \cite{HT} which gives a uniqueness result for Calder\'on’s problem with $C^1$ conductivities and with Lipschitz continuous conductivities, which are close to the identity in a suitable sense. Very recent work of Caro and Rogers \cite{CR} shows that Lipschitz conductivities can be determined from the Dirichlet-to-Neumann map. Finally, Haberman \cite{Hab} gives uniquess results for conductivities with unbounded gradient. In particular, uniqueness for conductivities in $W^{1,n}(\overline{\Omega})$ with $n=3,4$ is obtained.
\medskip

The structure of the paper is as follows. Section~\ref{CGOs} is devoted to the construction of  complex geometric optics solutions for the perturbed polyharmonic operator $\mathcal L_{A,q}$ with $A\in W^{-\frac{m-2}{2},\frac{2n}{m}}(\R^n,\C^n)\cap \mathcal E'(\overline{\Omega},\C^n)$ and $q\in W^{-\frac{m}{2},\frac{2n}{m}}(\R^n,\C)\cap \mathcal E'(\overline{\Omega},\C)$. Then the proof of Theorem~\ref{main th} is given in Section~\ref{Proof}. In Appendix~\ref{appendix a}, we study mapping properties of the operators $D_A$ and $m_q$. Finally, Appendix~\ref{appendix b} is devoted to the well-posedness of the Dirichlet problem for $\mathcal L_{A,q}$ with $A\in W^{-\frac{m-2}{2},\frac{2n}{m}}(\R^n,\C^n)\cap \mathcal E'(\overline{\Omega},\C^n)$ and $q\in W^{-\frac{m}{2},\frac{2n}{m}}(\R^n,\C)\cap \mathcal E'(\overline{\Omega},\C)$.

\section{Carleman estimates and Complex geometric optics solutions}\label{CGOs}
In this section we construct the complex geometric optics solutions for the equation $\mathcal{L}_{A,q}u=0$ in $\Omega$ with $A\in W^{-\frac{m-2}{2},\frac{2n}{m}}(\R^n,\C^n)\cap \mathcal E'(\overline{\Omega},\C^n)$ and $q\in W^{-\frac{m}{2},\frac{2n}{m}}(\R^n,\C)\cap \mathcal E'(\overline{\Omega},\C)$, $m\ge 2$. When constructing such solutions, we shall first derive Carleman estimates for the operator $\mathcal{L}_{A,q}$.

We start by recalling the Carleman estimate for the semiclassical Laplace operator $-h^2\Delta$ with a gain of two derivatives, established in \cite{STz}. Let $\widetilde \Omega$ be an open set in $\R^n$ such that $\Omega\subset\subset \widetilde \Omega$ and let $\varphi\in C^\infty(\widetilde \Omega,\R)$. Consider the conjugated operator
$$
P_\varphi=e^{\varphi/h}(-h^2\Delta)e^{-\varphi/h},
$$
and its semiclassical principal symbol
$$
p_\varphi(x,\xi)=\xi^2+2i\nabla\varphi\cdot\xi-|\nabla\varphi|^2,\quad x\in\widetilde\Omega,\quad \xi\in\R^n.
$$
Following \cite{KSU}, we say that $\varphi$ is a limiting Carleman weight for $-h^2\Delta$ in $\widetilde\Omega$, if $\nabla \varphi\neq 0$ in $\widetilde \Omega$ and the Poisson bracket of $\text{Re}\,p_\varphi$ and $\text{Im}\,p_\varphi$ satisfies
$$
\{\text{Re}\,p_\varphi,\text{Im}\,p_\varphi\}(x,\xi)=0\quad\text{when}\quad p_\varphi(x,\xi)=0,\quad (x,\xi)\in\widetilde\Omega\times\R^n.
$$
In this paper we shall consider only the linear Carleman weights $\varphi(x)=\alpha\cdot x$, $\alpha\in\R^n$, $|\alpha|=1$.

In what follows we consider the semiclassical norm on the standard Sobolev space $H^s(\R^n)$, $s\in\R$,
$$
\|u\|_{H^s_{\rm{scl}}(\R^n)}=\|\<hD\>^s u\|_{L^2(\R^n)},\quad \<\xi\>=(1+|\xi|^2)^{1/2}.
$$

Our starting point is the following Carleman estimate for the semiclassical Laplace operator $-h^2\Delta$ with a gain of two derivatives, which is due to Salo and Tzou \cite{STz}.
\begin{Proposition}\label{Carleman for -h^2Delta}
Let $\varphi$ be a limiting Carleman weight for $-h^2\Delta$ in $\widetilde\Omega$, and let $\varphi_\varepsilon=\varphi+\frac{h}{2\varepsilon}\varphi^2$. Then for $0<h\ll\varepsilon\ll 1$ and $s\in \R$, we have
$$
\frac{h}{\sqrt{\varepsilon}}\|u\|_{H^{s+2}_{\rm{scl}}(\R^n)}\le C\|e^{\varphi_\varepsilon/h}(-h^2\Delta)e^{-\varphi_\varepsilon/h}u\|_{H^{s}_{\rm{scl}}(\R^n)},\quad C>0,
$$
for all $u\in C_0^\infty(\Omega)$.
\end{Proposition}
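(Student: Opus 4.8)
Since the statement is quoted as being due to Salo and Tzou \cite{STz} (and built on the $L^2$ estimate of Kenig--Sj\"ostrand--Uhlmann \cite{KSU}), the cleanest course is simply to cite it; I nevertheless outline the argument I would give. Write $P_{\varphi_\varepsilon}=e^{\varphi_\varepsilon/h}(-h^2\Delta)e^{-\varphi_\varepsilon/h}$, which has semiclassical principal symbol $p_{\varphi_\varepsilon}(x,\xi)=|\xi|^2-|\nabla\varphi_\varepsilon(x)|^2+2i\,\xi\cdot\nabla\varphi_\varepsilon(x)$, with $\nabla\varphi_\varepsilon=(1+\tfrac h\varepsilon\varphi)\nabla\varphi$ bounded on $\widetilde\Omega$ uniformly in $h,\varepsilon$ because $\varphi$ is linear. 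The role of the convexification is that, although $\varphi$ being a limiting Carleman weight forces $\{\operatorname{Re}p_\varphi,\operatorname{Im}p_\varphi\}=0$ on the characteristic set $\{p_\varphi=0\}$, one gets $\{\operatorname{Re}p_{\varphi_\varepsilon},\operatorname{Im}p_{\varphi_\varepsilon}\}\ge c\,h/\varepsilon$ on $\{\operatorname{Re}p_{\varphi_\varepsilon}=\operatorname{Im}p_{\varphi_\varepsilon}=0\}$; this is what produces the loss $h/\sqrt\varepsilon$ rather than $h$.

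\emph{Step 1 (base $L^2$ estimate).} The first aim is $\tfrac{h}{\sqrt\varepsilon}\|u\|_{L^2(\R^n)}\le C\|P_{\varphi_\varepsilon}u\|_{L^2(\R^n)}$ for $u\in C_0^\infty(\Omega)$, which is the estimate of \cite{KSU}. Split $P_{\varphi_\varepsilon}=A+iB$ into its self-adjoint real and imaginary parts (Weyl symbols $\operatorname{Re}p_{\varphi_\varepsilon}$, $\operatorname{Im}p_{\varphi_\varepsilon}$ up to $O(h)$), so that
$$
\|P_{\varphi_\varepsilon}u\|_{L^2}^2=\|Au\|_{L^2}^2+\|Bu\|_{L^2}^2+(i[A,B]u,u)_{L^2},
$$
and $i[A,B]$ has principal symbol $h\{\operatorname{Re}p_{\varphi_\varepsilon},\operatorname{Im}p_{\varphi_\varepsilon}\}$. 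A phase-space partition of unity separating a neighbourhood of the characteristic set (there the frequencies are $\approx1$ and one uses the commutator lower bound, together with the two nonnegative terms, in the manner of H\"ormander's proof of sub-elliptic Carleman estimates) from its complement (there $(\operatorname{Re}p_{\varphi_\varepsilon})^2+(\operatorname{Im}p_{\varphi_\varepsilon})^2$ is bounded below, so $P_{\varphi_\varepsilon}$ is elliptic) then gives the claim, the $O(h^2)$ and lower-order remainders being absorbed since $0<h\ll\varepsilon\ll1$. (As $\varphi$ is linear, $P_{\varphi_\varepsilon}$ is nearly constant-coefficient and this step can alternatively be carried out by a direct Fourier-side computation using Paley--Wiener.)

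\emph{Step 2 (gaining two derivatives; general $s$).} Since $|\nabla\varphi_\varepsilon|$ is bounded, $\operatorname{Re}p_{\varphi_\varepsilon}=|\xi|^2-|\nabla\varphi_\varepsilon|^2\ge\tfrac12|\xi|^2$ for $|\xi|\ge C_0$, so $P_{\varphi_\varepsilon}$ is semiclassically elliptic at high frequencies; combined with the fact that on $\{|\xi|\le 2C_0\}$ the $H^2_{\rm{scl}}$ norm is controlled by the $L^2$ norm, this yields the partial elliptic estimate $\|u\|_{H^2_{\rm{scl}}(\R^n)}\le C(\|P_{\varphi_\varepsilon}u\|_{L^2(\R^n)}+\|u\|_{L^2(\R^n)})$. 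Multiplying by $h/\sqrt\varepsilon\le1$ and inserting Step 1 gives the case $s=0$. For general $s\in\R$ one repeats Steps 1--2 with the weighted norm $\|\langle hD\rangle^s\cdot\|_{L^2}$: since $\nabla\varphi_\varepsilon$ is smooth with bounded derivatives, commuting $\langle hD\rangle^s$ past $P_{\varphi_\varepsilon}$ (whose top-order part $|\xi|^2$ commutes) costs only $O(h)$ times an operator of order $s$, and such terms are absorbed into $\tfrac{h}{\sqrt\varepsilon}\|u\|_{H^{s+2}_{\rm{scl}}}$ for $\varepsilon$ small, because $h\|u\|_{H^{s}_{\rm{scl}}}\le\sqrt\varepsilon\cdot\tfrac{h}{\sqrt\varepsilon}\|u\|_{H^{s+2}_{\rm{scl}}}$. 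This yields $\tfrac{h}{\sqrt\varepsilon}\|u\|_{H^{s+2}_{\rm{scl}}(\R^n)}\le C\|P_{\varphi_\varepsilon}u\|_{H^{s}_{\rm{scl}}(\R^n)}$, as stated.

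\emph{Main obstacle.} The only genuinely delicate point is Step 1 — extracting the sharp loss $h/\sqrt\varepsilon$ from the convexified commutator. The naive G{\aa}rding inequality is useless here, since its $O(h)$ error dominates the gain $h^2/\varepsilon$ (because $h\ll\varepsilon$); one must therefore treat the three terms $\|Au\|^2$, $\|Bu\|^2$, $(i[A,B]u,u)$ jointly, or exploit the linearity of $\varphi$ directly on the Fourier side. Step 2 and the passage to general $s$ are routine semiclassical bookkeeping once $0<h\ll\varepsilon\ll1$ is used to absorb the errors. As already remarked, the statement is precisely the Carleman estimate of Salo and Tzou \cite{STz}, which may simply be quoted.
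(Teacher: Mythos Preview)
The paper does not prove this proposition at all; it simply quotes it as the Carleman estimate of Salo and Tzou \cite{STz} and moves on. Your proposal correctly identifies this and even supplies a reasonable sketch of the Salo--Tzou argument (the $L^2$ estimate via the convexified commutator from \cite{KSU}, followed by the gain of two derivatives from high-frequency ellipticity and the shift to general $s$ by commuting with $\langle hD\rangle^s$), so your approach is consistent with---indeed more detailed than---what the paper does.
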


Next, we state theorem on products of functions in Sobolev spaces. This result is well-known, see Theorem~2 in \cite[Subsection 4.4.4]{RS}.
\begin{Proposition}\label{product of Sobolev spaces}
Let $1<p,q<\infty$ and $0<s_1\le s_2<n\min(1/p,1/q)$. Then $W^{s_1,p}(\R^n)\cdot W^{s_2,q}(\R^n)\hookrightarrow W^{s_1,r}(\R^n)$ where $1/r=1/p+1/q-s_2/n$.
\end{Proposition}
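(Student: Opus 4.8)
This is a standard fact---it is \cite[Subsection~4.4.4, Theorem~2]{RS}---so the plan is only to indicate how it is proved, via Littlewood--Paley theory and the paraproduct calculus. I would first identify $W^{s,p}(\R^n)$ with the Triebel--Lizorkin space $F^s_{p,2}(\R^n)$, with norm $\big\|\big(\sum_{j\ge-1}2^{2sj}|\Delta_ju|^2\big)^{1/2}\big\|_{L^p}$, where $(\Delta_j)_{j\ge-1}$ is a dyadic Littlewood--Paley decomposition and $S_j=\sum_{k\le j}\Delta_k$. For $f\in W^{s_1,p}$ and $g\in W^{s_2,q}$ I would then split the product by Bony's decomposition $fg=T_fg+T_gf+R(f,g)$, with $T_fg=\sum_jS_{j-2}f\,\Delta_jg$, $T_gf=\sum_jS_{j-2}g\,\Delta_jf$ and the resonant term $R(f,g)=\sum_{|j-k|\le1}\Delta_jf\,\Delta_kg$, and estimate the three pieces separately, controlling the slowly varying factors pointwise by the Hardy--Littlewood maximal function ($|S_{j-2}f|\lesssim\mathcal Mf$ uniformly in $j$, and $\mathcal M$ bounded on $L^t(\R^n)$ for $t>1$).

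For the high--low paraproduct $T_gf$, whose $j$-th summand is frequency-localised in an annulus of size $2^j$, I would use the hypothesis $s_2<n/q$ together with the Sobolev embedding $W^{s_2,q}\hookrightarrow L^{q_*}$, $\tfrac1{q_*}=\tfrac1q-\tfrac{s_2}{n}>0$, so that $\mathcal Mg\in L^{q_*}$; keeping the full $s_1$-smoothness of $f$ and applying H\"older in $x$ gives $\|T_gf\|_{F^{s_1}_{r,2}}\lesssim\|\mathcal Mg\|_{L^{q_*}}\big\|\big(\sum_j2^{2s_1j}|\Delta_jf|^2\big)^{1/2}\big\|_{L^p}\lesssim\|g\|_{W^{s_2,q}}\|f\|_{W^{s_1,p}}$, with $\tfrac1r=\tfrac1{q_*}+\tfrac1p=\tfrac1p+\tfrac1q-\tfrac{s_2}{n}$, exactly the asserted exponent. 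For the low--high paraproduct $T_fg$ I would instead invoke $s_1\le s_2<n/p$ and $W^{s_1,p}\hookrightarrow L^{p_*}$, $\tfrac1{p_*}=\tfrac1p-\tfrac{s_1}{n}$, while keeping the full $s_2$-smoothness of $g$; the same computation yields $T_fg\in F^{s_2}_{\tilde r,2}=W^{s_2,\tilde r}$ with $\tfrac1{\tilde r}=\tfrac1{p_*}+\tfrac1q=\tfrac1p+\tfrac1q-\tfrac{s_1}{n}$. The point I would then exploit is that $T_fg$ a priori carries the \emph{larger} smoothness $s_2$, so one can trade smoothness for integrability via the genuine Sobolev embedding $W^{s_2,\tilde r}\hookrightarrow W^{s_1,r}$ (valid since $s_2\ge s_1$ and $s_2-n/\tilde r=s_1-n/r$), bringing $T_fg$ into the very space $W^{s_1,r}$.

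For the resonant term $R(f,g)$ the two factors lie at comparable frequencies, so $\Delta_jf\,\Delta_kg$ is frequency-localised merely in a ball of radius $\sim2^j$ rather than in an annulus; reassembling such pieces in $F^{s_1}_{r,2}$ is still legitimate precisely because $s_1>0$, and it gives $\|R(f,g)\|_{F^{s_1}_{r,2}}\lesssim\big\|\big(\sum_\ell2^{2s_1\ell}|\Delta_\ell f|^2|\widetilde\Delta_\ell g|^2\big)^{1/2}\big\|_{L^r}$ with $\widetilde\Delta_\ell=\sum_{|k-\ell|\le1}\Delta_k$; bounding $|\widetilde\Delta_\ell g|\lesssim\mathcal Mg$ uniformly in $\ell$ and applying H\"older once more with the same exponents $p$ and $q_*$ as for $T_gf$ controls this by $\|f\|_{W^{s_1,p}}\|g\|_{W^{s_2,q}}$. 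Combining the three estimates proves the embedding. The steps I expect to demand the most care are the last two: one must notice that $s_2<n\min(1/p,1/q)$ is exactly the condition keeping the Sobolev conjugates $p_*$ and $q_*$ finite and positive (so the low-frequency factors land in honest Lebesgue spaces and H\"older closes), and one must use both the ``hidden'' $s_2$-smoothness of the low--high paraproduct and the positivity $s_1>0$ in order to put all three pieces into the single space $W^{s_1,r}$.
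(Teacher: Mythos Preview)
The paper does not actually prove this proposition: immediately before the statement it says ``This result is well-known, see Theorem~2 in \cite[Subsection 4.4.4]{RS}'', and no argument is given. Your paraproduct sketch is the standard route by which such product estimates are established in \cite{RS}, and the outline is correct; in particular you have identified the right roles of the hypotheses (the condition $s_2<n\min(1/p,1/q)$ keeps both Sobolev conjugates $p_*,q_*$ finite so H\"older closes, and $s_1>0$ is what allows the resonant pieces, which are only localised in balls, to be reassembled in $F^{s_1}_{r,2}$). So there is nothing to compare against in the paper itself---your proposal simply supplies what the paper chose to quote.
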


Now we shall derive Carleman estimate for the perturbed operator $\mathcal L_{A,q}$ with $A\in W^{-\frac{m-2}{2},\frac{2n}{m}}(\R^n,\C^n)\cap \mathcal E'(\overline{\Omega},\C^n)$ and $q\in W^{-\frac{m}{2},\frac{2n}{m}}(\R^n,\C)\cap \mathcal E'(\overline{\Omega},\C)$. To that end we shall iterate ($m$ times) inequality in Proposition~\ref{Carleman for -h^2Delta} and use it with $s=-m$, and with fixed $\varepsilon>0$ being sufficiently small, that is independent of $h$. We have the following result.
\begin{Proposition}\label{Carleman est L_{A,q} prop}
Let $\varphi$ be a limiting Carleman weight for $-h^2\Delta$ in $\widetilde\Omega$, and suppose that $A\in W^{-\frac{m-2}{2},\frac{2n}{m}}(\R^n,\C^n)\cap \mathcal E'(\overline{\Omega},\C^n)$ and $q\in W^{-\frac{m}{2},\frac{2n}{m}}(\R^n,\C)\cap \mathcal E'(\overline{\Omega},\C)$. Then for $0<h\ll 1$, we have
\begin{equation}\label{Carleman est L_{A,q}}
\|u\|_{H^m_{\rm{scl}}(\R^n)}\lesim \frac{1}{h^m}\|e^{\varphi/h}(h^{2m}\mathcal{L}_{A,q})e^{-\varphi/h}u\|_{H^{-m}_{\rm{scl}}(\R^n)},
\end{equation}
for all $u\in C^\infty_0(\Omega)$.
\end{Proposition}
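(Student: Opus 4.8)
The plan is to deduce \eqref{Carleman est L_{A,q}} from Proposition~\ref{Carleman for -h^2Delta} in two stages: first, iterating that estimate $m$ times to obtain the Carleman estimate for the principal part $h^{2m}(-\Delta)^m$ with a gain of $2m$ derivatives; and second, absorbing the lower order operators $D_A$ and $m_q$ into it by means of semiclassical analogues of the Sobolev duality pairing and of the product estimate of Proposition~\ref{product of Sobolev spaces}. The point in the second stage is that the semiclassical Sobolev norms of the fixed distributions $A$ and $q$, although they blow up as $h\to0$ (because these distributions have negative Sobolev order), do so slower than the powers of $h$ gained in the first stage.

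For the first stage, fix $\varepsilon>0$ small and independent of $h$, put $\varphi_\varepsilon=\varphi+\frac{h}{2\varepsilon}\varphi^2$ and $P_{\varphi_\varepsilon}=e^{\varphi_\varepsilon/h}(-h^2\Delta)e^{-\varphi_\varepsilon/h}$, and apply the inequality of Proposition~\ref{Carleman for -h^2Delta} successively with $s=m-2,m-4,\dots,-m$ to the functions $u,P_{\varphi_\varepsilon}u,\dots,P_{\varphi_\varepsilon}^{m-1}u$, each of which again lies in $C^\infty_0(\Omega)$ since $e^{\pm\varphi_\varepsilon/h}\in C^\infty(\widetilde\Omega)$ and $-h^2\Delta$ preserves $C^\infty_0(\Omega)$. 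Because $P_{\varphi_\varepsilon}^m=e^{\varphi_\varepsilon/h}(h^{2m}(-\Delta)^m)e^{-\varphi_\varepsilon/h}$, chaining these inequalities yields $(h/\sqrt\varepsilon)^m\|u\|_{H^m_{\rm scl}(\R^n)}\lesim\|e^{\varphi_\varepsilon/h}(h^{2m}(-\Delta)^m)e^{-\varphi_\varepsilon/h}u\|_{H^{-m}_{\rm scl}(\R^n)}$. To replace $\varphi_\varepsilon$ by $\varphi$: with $\varepsilon$ fixed, $g:=e^{\varphi^2/(2\varepsilon)}$ is smooth, positive and has bounded derivatives, so multiplication by $g^{\pm1}$ is bounded on each $H^s_{\rm scl}(\R^n)$ uniformly in $h$, and $e^{\varphi_\varepsilon/h}Pe^{-\varphi_\varepsilon/h}=g\,(e^{\varphi/h}Pe^{-\varphi/h})\,g^{-1}$ for every operator $P$; applying the previous inequality to $gu\in C^\infty_0(\Omega)$ and absorbing $g^{\pm1}$ gives, for $0<h\ll1$,
\[
h^m\|u\|_{H^m_{\rm scl}(\R^n)}\lesim\|e^{\varphi/h}(h^{2m}(-\Delta)^m)e^{-\varphi/h}u\|_{H^{-m}_{\rm scl}(\R^n)},\qquad u\in C^\infty_0(\Omega).
\]

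For the second stage, write $(-\Delta)^m=\mathcal L_{A,q}-D_A-m_q$; by the triangle inequality and the last display it suffices to show that $h^{-m}\|e^{\varphi/h}(h^{2m}m_q)e^{-\varphi/h}u\|_{H^{-m}_{\rm scl}}$ and $h^{-m}\|e^{\varphi/h}(h^{2m}D_A)e^{-\varphi/h}u\|_{H^{-m}_{\rm scl}}$ are both $o(1)\|u\|_{H^m_{\rm scl}}$ as $h\to0$, uniformly over $u\in C^\infty_0(\Omega)$, and then to fix $h$ small. Since multiplication operators commute, $e^{\varphi/h}m_qe^{-\varphi/h}=m_q$ and $e^{\varphi/h}D_Ae^{-\varphi/h}u=A\cdot Du+\tfrac{i}{h}(A\cdot\nabla\varphi)u$. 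I would estimate these using the semiclassical duality $\|w\|_{H^{-m}_{\rm scl}}=\sup\{|\<w,\psi\>|:\|\psi\|_{H^m_{\rm scl}}\le1\}$, the bound $|\<v,w\>|\le\|v\|_{W^{-s,p}_{\rm scl}}\|w\|_{W^{s,p'}_{\rm scl}}$ (immediate from $\<v,w\>=\<\<hD\>^{-s}v,\<hD\>^{s}w\>$ and H\"older), and the semiclassical product estimate obtained from Proposition~\ref{product of Sobolev spaces} via the scaling $\|w\|_{W^{s,p}_{\rm scl}(\R^n)}=h^{n/p}\|w(h\,\cdot)\|_{W^{s,p}(\R^n)}$: taking both exponents there equal to $2$ gives
\[
\|fg\|_{W^{s_1,r}_{\rm scl}(\R^n)}\lesim h^{-s_2}\|f\|_{H^{s_1}_{\rm scl}(\R^n)}\|g\|_{H^{s_2}_{\rm scl}(\R^n)},\qquad\tfrac{1}{r}=1-\tfrac{s_2}{n},\quad 0<s_1\le s_2<\tfrac{n}{2}.
\]
Pairing $q\in W^{-m/2,2n/m}(\R^n)$ against $W^{m/2,(2n/m)'}(\R^n)$ and using the product estimate with $s_1=s_2=m/2$ (forcing $r=(2n/m)'$, and using $m/2<n/2$, i.e.\ $n>m$) together with $\|u\|_{H^{s}_{\rm scl}}\le\|u\|_{H^m_{\rm scl}}$ for $s\le m$, one gets $h^{-m}\|e^{\varphi/h}(h^{2m}m_q)e^{-\varphi/h}u\|_{H^{-m}_{\rm scl}}\lesim h^{m/2}\|q\|_{W^{-m/2,2n/m}_{\rm scl}(\R^n)}\|u\|_{H^m_{\rm scl}}$. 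Similarly, pairing $A\in W^{-(m-2)/2,2n/m}(\R^n)$ against $W^{(m-2)/2,(2n/m)'}(\R^n)$, using the product estimate with $s_1=(m-2)/2$, $s_2=m/2$ (valid for $m\ge3$), the bound $\|D_ju\|_{H^{(m-2)/2}_{\rm scl}}\le h^{-1}\|u\|_{H^{m/2}_{\rm scl}}$, and the uniform boundedness of multiplication by $\p_j\varphi$ on $W^{(m-2)/2,(2n/m)'}_{\rm scl}$, one gets $h^{-m}\|e^{\varphi/h}(h^{2m}D_A)e^{-\varphi/h}u\|_{H^{-m}_{\rm scl}}\lesim h^{(m-2)/2}\|A\|_{W^{-(m-2)/2,2n/m}_{\rm scl}(\R^n)}\|u\|_{H^m_{\rm scl}}$; when $m=2$ one has $A\in L^n(\R^n)$ and estimates the two resulting terms directly by H\"older's inequality and the Sobolev embedding, using that $A\cdot Du$ and $(A\cdot\nabla\varphi)u$ are supported in $\overline\Omega$.

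The step I expect to be the main obstacle is proving that the two prefactors $h^{m/2}\|q\|_{W^{-m/2,2n/m}_{\rm scl}}$ and $h^{(m-2)/2}\|A\|_{W^{-(m-2)/2,2n/m}_{\rm scl}}$ tend to $0$ as $h\to0$ — this is subtle precisely because the semiclassical norms there diverge, $A$ and $q$ having negative Sobolev order. The mechanism is the following: for $s\ge0$ and $v\in W^{-s,p}(\R^n)$ one has $h^{s}\|v\|_{W^{-s,p}_{\rm scl}}=\|\mu_h(D)\<D\>^{-s}v\|_{L^p(\R^n)}$, where $\mu_h(\xi)=\big(h^2\<\xi\>^2/\<h\xi\>^2\big)^{s/2}$ and $\<D\>^{-s}v\in L^p(\R^n)$; since $0\le\mu_h\le1$, $\mu_h(\xi)\to0$ for each fixed $\xi$, and the $\mu_h$, $0<h\le1$, are Mikhlin multipliers with uniformly bounded constants, the operators $\mu_h(D)$ are bounded on $L^p(\R^n)$ uniformly in $h$ and converge strongly to $0$ (by dominated convergence on the dense class of functions with Fourier transform in $C^\infty_0$), so $h^s\|v\|_{W^{-s,p}_{\rm scl}}\to0$. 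Applying this with $(s,v)=(m/2,q)$ and with $(s,v)=((m-2)/2,A)$ makes both prefactors $o(1)$. Choosing $h$ small enough that the corresponding $o(1)\|u\|_{H^m_{\rm scl}}$ is absorbed into the left-hand side of the first-stage estimate (divided by $h^m$) then completes the proof of \eqref{Carleman est L_{A,q}}.
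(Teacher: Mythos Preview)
Your overall strategy — iterate the Salo–Tzou estimate, then absorb the lower–order terms — is the same as the paper's, and your semiclassical product estimate via dilation is a nice way to organize the bookkeeping. The difference is the order of operations: the paper keeps the convexified weight $\varphi_\varepsilon$ through the perturbation estimates and only replaces it by $\varphi$ at the very end, whereas you pass from $\varphi_\varepsilon$ to $\varphi$ \emph{before} treating $D_A$ and $m_q$. This is not cosmetic. In the paper the perturbation bounds come out as $Ch^m\big(\|A\|_{W^{-(m-2)/2,2n/m}}+\|q\|_{W^{-m/2,2n/m}}\big)\|u\|_{H^m_{\rm scl}}$, exactly the same power of $h$ as the left side $\varepsilon^{-m/2}h^m\|u\|_{H^m_{\rm scl}}$; absorption is achieved by taking $\varepsilon$ small (but fixed), not by taking $h$ small. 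By converting to $\varphi$ first you freeze $\varepsilon$ into the constant and are forced to show the perturbations are $o(1)\|u\|_{H^m_{\rm scl}}$ as $h\to0$.

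That is where the gap is. Your Mikhlin/strong-convergence argument for $h^{s}\|v\|_{W^{-s,p}_{\rm scl}}\to0$ needs $s>0$; applied to $A$ it uses $s=(m-2)/2$, which vanishes when $m=2$. For $m=2$ you instead propose to bound the two pieces of $e^{\varphi/h}D_A(e^{-\varphi/h}u)=A\cdot Du+\tfrac{i}{h}(A\cdot\nabla\varphi)u$ ``directly by H\"older and Sobolev embedding''. But that gives only
\[
h^{-2}\big\|h^{4}e^{\varphi/h}D_A(e^{-\varphi/h}u)\big\|_{H^{-2}_{\rm scl}}
\;\le\; C\,\|A\|_{L^{n}}\,\|u\|_{H^{2}_{\rm scl}},
\]
an $O(1)$ bound (exactly the bound the paper derives), not $o(1)$, so it cannot be absorbed into a left-hand side whose constant is already fixed. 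The compact support of $A\cdot Du$ and $(A\cdot\nabla\varphi)u$ does not improve the power of $h$. To rescue your route at $m=2$ you would need an additional density step (split $A=A^\sharp+(A-A^\sharp)$ with $A^\sharp\in L^\infty$ and $\|A-A^\sharp\|_{L^{n}}$ small, gaining a factor $h$ on the smooth part), which you do not carry out. The cleaner fix is to follow the paper: keep $\varphi_\varepsilon$ through the perturbation bounds, absorb via smallness of $\varepsilon$, and only then pass to $\varphi$. A minor additional remark: even for $m\ge3$ you are asserting, without proof, the uniform H\"ormander–Mikhlin bounds for the family $\mu_h(\xi)=\big(h\langle\xi\rangle/\langle h\xi\rangle\big)^{s}$; this is true but deserves at least a reference, and in any case is unnecessary once you adopt the paper's absorption via $\varepsilon$.
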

\begin{proof}
Iterating the Carleman estimate in Proposition~\ref{Carleman for -h^2Delta} $m$ times, $m\ge2$, we get the following Carleman estimate for the polyharmonic operator,
$$
\frac{h^m}{\varepsilon^{m/2}}\|u\|_{H^{s+2m}_{\rm{scl}}(\R^n)}\le C\|e^{\varphi_\varepsilon/h}(-h^2\Delta)^m e^{-\varphi_\varepsilon/h}u\|_{H^{s}_{\rm{scl}}(\R^n)},
$$
for all $u\in C^\infty_0(\Omega)$, $s\in\R$ and $0<h\ll \varepsilon\ll 1$. We shall use this estimate with $s=-m$, and with fixed $\varepsilon>0$ being sufficiently small but independent of $h$:
\begin{equation}\label{Carleman for (-h^2Delta)^m}
\frac{h^m}{\varepsilon^{m/2}}\|u\|_{H^{m}_{\rm{scl}}(\R^n)}\le C\|e^{\varphi_\varepsilon/h}(-h^2\Delta)^m e^{-\varphi_\varepsilon/h}u\|_{H^{-m}_{\rm{scl}}(\R^n)},
\end{equation}
for all $u\in C^\infty_0(\Omega)$ and $0<h\ll \varepsilon\ll 1$.
\medskip

In order to prove the proposition it will be convenient to use the following characterization of the semiclassical norm in the Sobolev space $H^{-m}(\R^n)$
\begin{equation}\label{H^-m characterization}
\|v\|_{H^{-m}_{\rm scl}(\R^n)}=\sup_{0\neq \psi\in C^\infty_0(\R^n)}\frac{|\<v,\psi\>_{\R^n}|}{\|\psi\|_{H^{m}_{\rm scl}(\R^n)}},
\end{equation}
where $\<\cdot,\cdot\>_{\R^n}$ is the distribution duality on $\R^n$.
\medskip

Let $\varphi_\varepsilon=\varphi+\frac{h}{2\varepsilon}\varphi^2$ be the convexified weight with $\varepsilon>0$ such that $0<h\ll \varepsilon\ll 1$, and let $u\in C^\infty_0(\Omega)$. Then for all $0\neq \psi\in C^\infty_0(\R^n)$, by duality of the spaces $W^{-\frac{m}{2},\frac{2n}{m}}(\R^n,\C)$ and $W^{\frac{m}{2},\frac{2n}{2n-m}}(\R^n,\C)$ and by Proposition~\ref{product of Sobolev spaces}, we have
\begin{align*}
|\<e^{\varphi_\varepsilon/h}h^{2m}m_q(e^{-\varphi_\varepsilon/h}u),\psi\>_{\R^n}|&\le Ch^{2m}\|q\|_{W^{-\frac{m}{2},\frac{2n}{m}}(\R^n)}\|u\psi\|_{{W^{\frac m2,\frac{2n}{2n-m}}}(\R^n)}\\
&\le Ch^{2m}\|q\|_{W^{-\frac{m}{2},\frac{2n}{m}}(\R^n)}\|u\|_{H^{\frac m2}(\R^n)}\|\psi\|_{H^{\frac m2}(\R^n)}\\
&\le Ch^m\|q\|_{W^{-\frac{m}{2},\frac{2n}{m}}(\R^n)}\|u\|_{H^{\frac m2}_{\rm scl}(\R^n)}\|\psi\|_{H^{\frac m2}_{\rm{scl}}(\R^n)}\\
&\le Ch^m\|q\|_{W^{-\frac{m}{2},\frac{2n}{m}}(\R^n)}\|u\|_{H^{m}_{\rm scl}(\R^n)}\|\psi\|_{H^{m}_{\rm{scl}}(\R^n)}.
\end{align*}
Therefore, by \eqref{H^-m characterization}, we obtain
\begin{equation}\label{q part}
\|e^{\varphi_\varepsilon/h}h^{2m}m_q(e^{-\varphi_\varepsilon/h}u)\|_{H^{-m}_{\rm scl}(\R^n)}\le Ch^m\|q\|_{W^{-\frac{m}{2},\frac{2n}{m}}(\R^n)}\|u\|_{H^m_{\rm scl}(\R^n)}.
\end{equation}
For all $0\neq \psi\in C^\infty_0(\Omega)$, we can show
\begin{align*}
|\<e^{\varphi_\varepsilon/h}&h^{2m}D_A(e^{-\varphi_\varepsilon/h}u),\psi\>_{\R^n}|=|\<h^{2m}A,e^{\varphi_\varepsilon/h}\psi D(e^{-\varphi_\varepsilon/h}u)\>_{\R^n}|\\
&\le |\<h^{2m-1}A,e^{\varphi_\varepsilon/h}\psi [-u(1+h\varphi/\varepsilon)D\varphi+hDu]\>_{\R^n}|\\
&\le Ch^{2m-1}\|A\|_{W^{-\frac{m-2}{2},\frac{2n}{m}}(\R^n)}\|-u(1+h\varphi/\varepsilon)D\varphi\psi+hDu\psi\|_{W^{\frac{m-2}{2},\frac{2n}{2n-m}}(\R^n)}.
\end{align*}
In the last step we used duality between $W^{-\frac{m-2}{2},\frac{2n}{m}}(\R^n,\C^n)$ and $W^{\frac{m-2}{2},\frac{2n}{2n-m}}(\R^n,\C^n)$. In the case $m\ge 3$, we use Proposition~\ref{product of Sobolev spaces} with $p=q=2$, $s_1=\frac{m-2}{2}$ and $s_2=\frac{m}{2}$ (this is exactly the moment where we need the stronger assumption $m<n$ rather than $m<2n$), to get
\begin{align*}
\|-u(1+h\varphi/\varepsilon)D\varphi\psi&+hDu\psi\|_{W^{\frac{m-2}{2},\frac{2n}{2n-m}}(\R^n)}\\
&\le C\|-u(1+h\varphi/\varepsilon)D\varphi+hDu\|_{H^{\frac{m-2}{2}}(\R^n)}\|\psi\|_{H^{\frac{m}{2}}(\R^n)}\\
&\le \frac{C}{h^{m-1}}\|u\|_{H^{\frac{m}{2}}_{\rm{scl}}(\R^n)}\|\psi\|_{H^{\frac{m}{2}}_{\rm{scl}}(\R^n)}\\
&\le  \frac{C}{h^{m-1}}\|u\|_{H^{m}_{\rm{scl}}(\R^n)}\|\psi\|_{H^{m}_{\rm{scl}}(\R^n)},
\end{align*}
for some constant $C>0$ depending only on $\varphi$. When $m=2$, we use H\"older's inequality and Sobolev embedding $H^{1}(\R^n)\subset L^{\frac{2n}{n-2}}(\R^n)$ (see \cite[Chapter~13, Proposition~6.4]{Taylor3}), and obtain
\begin{align*}
\|-u(1+h\varphi/\varepsilon)D\varphi\psi&+hDu\psi\|_{L^{\frac{2n}{2n-2}}(\R^n)}\\
&\le C\|-u(1+h\varphi/\varepsilon)D\varphi+hDu\|_{L^{\frac{2n}{n}}(\R^n)}\|\psi\|_{L^{\frac{2n}{n-2}}(\R^n)}\\
&\le C\|-u(1+h\varphi/\varepsilon)D\varphi+hDu\|_{L^2(\R^n)}\|\psi\|_{H^{1}(\R^n)}\\
&\le \frac{C}{h}\|u\|_{H^{1}_{\rm{scl}}(\R^n)}\|\psi\|_{H^{1}_{\rm{scl}}(\R^n)}\\
&\le  \frac{C}{h}\|u\|_{H^{2}_{\rm{scl}}(\R^n)}\|\psi\|_{H^{2}_{\rm{scl}}(\R^n)},
\end{align*}
for some constant $C>0$ depending only on $\varphi$. Therefore, for $m\ge 2$, we get
$$
|\<e^{\varphi_\varepsilon/h}h^{2m}D_A(e^{-\varphi_\varepsilon/h}u),\psi\>_{\R^n}|\le C h^m\|A\|_{W^{-\frac{m-2}{2},\frac{2n}{m}}(\R^n)}\|u\|_{H^{m}_{\rm{scl}}(\R^n)}\|\psi\|_{H^{m}_{\rm{scl}}(\R^n)}.
$$
Hence, by \eqref{H^-m characterization}, we obtain
$$
\|e^{\varphi_\varepsilon/h}h^{2m}D_A(e^{-\varphi_\varepsilon/h}u)\|_{H^{-m}_{\rm{scl}}(\R^n)}\le Ch^m\|A\|_{W^{-\frac{m-2}{2},\frac{2n}{m}}(\R^n)}\|u\|_{H^{m}_{\rm{scl}}(\R^n)}.
$$
Combining these estimates with \eqref{Carleman for (-h^2Delta)^m} and \eqref{q part} we get that for small enough $h>0$
$$
\|u\|_{H^m_{\rm{scl}}(\R^n)}\lesssim \frac{1}{h^m}\|e^{\varphi_\varepsilon/h}(h^{2m}\mathcal L_{A,q})e^{-\varphi_\varepsilon/h}u\|_{H^{-m}_{\rm{scl}}(\R^n)}.
$$
Using that
$$
e^{-\varphi_\varepsilon/h}u=e^{-\varphi/h}e^{-\varphi^2/{2\varepsilon}}u
$$
we obtain \eqref{Carleman est L_{A,q}}.
\end{proof}

Let $\varphi\in C^\infty(\widetilde\Omega,\R)$ be a limiting Carleman weight for $-h^2\Delta$. Set
$$
\mathcal L_{\varphi}:=e^{\varphi/h}(h^{2m}\mathcal L_{A,q})e^{-\varphi/h}.
$$
Then by Proposition~\ref{adjoints of D_A and m_q} we have
$$
\<\mathcal L_{\varphi} u,\overline{v}\>_{\Omega}= \<u,\overline{\mathcal L_{\varphi}^* v}\>_{\Omega},\quad u,v\in C^\infty_0(\Omega),
$$
where $\mathcal L_{\varphi}^*=e^{-\varphi/h}(h^2\mathcal L_{\overline{A},\overline{q}+D\cdot\overline{A}})e^{\varphi/h}$ is the formal adjoint of $\mathcal L_{\varphi}$, and $\langle \cdot,\cdot\rangle_{\Omega}$ is the distribution duality on $\Omega$. We have that
$$
\mathcal L_{\varphi}^*:C^\infty_0(\Omega)\to H^{-m}(\R^n)\cap\mathcal{E}'(\Omega)
$$
is bounded. Therefore, the estimate \eqref{Carleman est L_{A,q}} holds for $\mathcal L_{\varphi}^*$, since $-\varphi$ is a limiting Carleman weight as well.
\medskip

To construct the complex geometric optics solutions for the operator $\mathcal L_{A,q}$, we need to convert the Carleman estimate \eqref{Carleman est L_{A,q}} for $\mathcal L_{\varphi}^*$ into the following solvability result. The proof is essentially well-known, and we include it here for the convenience of the reader. In what follows, we shall write
$$
\|u\|_{H^m_{\rm scl}(\Omega)}=\sum_{|\alpha|\le m}\|(h\p)^\alpha u\|_{L^2(\Omega)},
$$
$$
\|v\|_{H^{-m}_{\rm scl}(\Omega)}=\sup_{0\neq\phi\in C^\infty_0(\Omega)}\frac{|\<v,\phi\>_{\Omega}|}{\|\phi\|_{H^m_{\rm scl}(\Omega)}}=\sup_{0\neq f\in H^m_0(\Omega)}\frac{|\<v,f\>_{\Omega}|}{\|f\|_{H^m_{\rm scl}(\Omega)}}.
$$
\begin{Proposition}\label{Solvability L_{A,q}}
Let $A\in W^{-\frac{m-2}{2},\frac{2n}{m}}(\R^n,\C^n)\cap \mathcal E'(\overline{\Omega},\C^n)$ and $q\in W^{-\frac{m}{2},\frac{2n}{m}}(\R^n,\C)\cap \mathcal E'(\overline{\Omega},\C)$, and let $\varphi$ be a limiting Carleman weight for the semiclassical Laplacian on $\tilde \Omega$. If $h>0$ is small enough, then for any $v\in H^{-m}(\Omega)$, there is a solution $u\in H^m(\Omega)$ of the equation
$$
e^{\varphi/h}(h^{2m}\mathcal L_{A,q})e^{-\varphi/h}u=v\quad\textrm{in}\quad \Omega,
$$
which satisfies
$$
\|u\|_{H^m_{\rm scl}(\Omega)}\lesim \frac{1}{h^m}\|v\|_{H^{-m}_{\rm scl}(\Omega)}.
$$
\end{Proposition}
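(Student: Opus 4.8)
The plan is to obtain the solvability statement from the Carleman estimate \eqref{Carleman est L_{A,q}} by the standard functional-analytic duality argument (Hahn--Banach followed by Riesz representation), carried out for the formal adjoint $\mathcal L_\varphi^*$ rather than for $\mathcal L_\varphi$ itself. As a preliminary step I would rewrite the Carleman estimate with norms on $\Omega$. By the discussion preceding the statement, estimate \eqref{Carleman est L_{A,q}} holds with $\mathcal L_\varphi^*$ in place of $\mathcal L_\varphi$, i.e. $\|w\|_{H^m_{\rm scl}(\R^n)}\lesim h^{-m}\|\mathcal L_\varphi^* w\|_{H^{-m}_{\rm scl}(\R^n)}$ for all $w\in C^\infty_0(\Omega)$ and all small $h>0$. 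For such $w$ one trivially has $\|w\|_{H^m_{\rm scl}(\Omega)}\lesim\|w\|_{H^m_{\rm scl}(\R^n)}$, while $\mathcal L_\varphi^* w$ is supported in the fixed compact set $\supp w\subset\subset\Omega$, and for any distribution $g$ supported in a fixed compact subset of $\Omega$ one gets $\|g\|_{H^{-m}_{\rm scl}(\R^n)}\lesim\|g\|_{H^{-m}_{\rm scl}(\Omega)}$ by inserting a cut-off $\chi\in C^\infty_0(\Omega)$ equal to $1$ near $\supp g$ and using that multiplication by the fixed function $\chi$ is bounded on $H^m_{\rm scl}(\R^n)$ uniformly for $h\le 1$. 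Hence
$$
\|w\|_{H^m_{\rm scl}(\Omega)}\lesim \frac{1}{h^m}\|\mathcal L_\varphi^* w\|_{H^{-m}_{\rm scl}(\Omega)},\qquad w\in C^\infty_0(\Omega).
$$

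Given $v\in H^{-m}(\Omega)$, I would then introduce the linear subspace $\mathcal S=\{\overline{\mathcal L_\varphi^* w}:w\in C^\infty_0(\Omega)\}\subset H^{-m}_{\rm scl}(\Omega)$ and define $\ell\colon\mathcal S\to\C$ by $\ell(\overline{\mathcal L_\varphi^* w})=\<v,\overline w\>_\Omega$. The Carleman estimate forces $\mathcal L_\varphi^*$ to be injective on $C^\infty_0(\Omega)$, so $\ell$ is well defined, and it is clearly linear. It is also bounded on $\mathcal S$: using the definition of $\|\cdot\|_{H^{-m}_{\rm scl}(\Omega)}$ and then the displayed estimate,
$$
|\ell(\overline{\mathcal L_\varphi^* w})|=|\<v,\overline w\>_\Omega|\le\|v\|_{H^{-m}_{\rm scl}(\Omega)}\|w\|_{H^m_{\rm scl}(\Omega)}\lesim \frac{1}{h^m}\|v\|_{H^{-m}_{\rm scl}(\Omega)}\,\|\overline{\mathcal L_\varphi^* w}\|_{H^{-m}_{\rm scl}(\Omega)}.
$$
By the Hahn--Banach theorem $\ell$ extends to a bounded linear functional on all of $H^{-m}_{\rm scl}(\Omega)$ with $\|\ell\|\lesim h^{-m}\|v\|_{H^{-m}_{\rm scl}(\Omega)}$. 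Since $H^m_0(\Omega)$ is reflexive and $H^{-m}_{\rm scl}(\Omega)$ is its dual with respect to the pairing $\<\cdot,\cdot\>_\Omega$, there is a unique $u\in H^m_0(\Omega)$ with $\ell(g)=\<u,g\>_\Omega$ for all $g\in H^{-m}_{\rm scl}(\Omega)$ and $\|u\|_{H^m_{\rm scl}(\Omega)}=\|\ell\|\lesim h^{-m}\|v\|_{H^{-m}_{\rm scl}(\Omega)}$.

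It remains to verify that $u$ solves the equation. The adjoint identity $\<\mathcal L_\varphi u,\overline w\>_\Omega=\<u,\overline{\mathcal L_\varphi^* w}\>_\Omega$, which holds for $u,w\in C^\infty_0(\Omega)$, extends to all $u\in H^m_0(\Omega)$ by density of $C^\infty_0(\Omega)$ in $H^m_0(\Omega)$ together with the boundedness of $\mathcal L_\varphi\colon H^m_0(\Omega)\to H^{-m}(\Omega)$. Therefore, for every $w\in C^\infty_0(\Omega)$,
$$
\<\mathcal L_\varphi u,\overline w\>_\Omega=\<u,\overline{\mathcal L_\varphi^* w}\>_\Omega=\ell(\overline{\mathcal L_\varphi^* w})=\<v,\overline w\>_\Omega,
$$
so $\<\mathcal L_\varphi u-v,\phi\>_\Omega=0$ for all $\phi\in C^\infty_0(\Omega)$, that is $\mathcal L_\varphi u=v$ in $\Omega$; combined with $u\in H^m_0(\Omega)\subset H^m(\Omega)$ and the norm bound, this is the assertion. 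I do not expect a genuine obstacle: the Carleman estimate supplies exactly the coercivity needed to make $\ell$ bounded, and everything after that is soft. The one point requiring care is the preliminary step --- reconciling the $H^{\pm m}_{\rm scl}(\R^n)$ norms in which the Carleman estimate is stated with the $H^{\pm m}_{\rm scl}(\Omega)$ norms appearing in the conclusion --- which works precisely because $w$ and $\mathcal L_\varphi^* w$ are compactly supported in $\Omega$.
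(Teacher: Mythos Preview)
Your overall strategy---deduce solvability from the Carleman estimate for the adjoint via Hahn--Banach and Riesz representation---is exactly what the paper does. The gap is in your preliminary step, where you claim
\[
\|\mathcal L_\varphi^* w\|_{H^{-m}_{\rm scl}(\R^n)}\lesim\|\mathcal L_\varphi^* w\|_{H^{-m}_{\rm scl}(\Omega)}
\]
uniformly in $w\in C^\infty_0(\Omega)$. Your justification inserts a cut-off $\chi\in C^\infty_0(\Omega)$ equal to $1$ near $\supp(\mathcal L_\varphi^* w)\subset\supp w$; but $\supp w$ is not fixed, and as it approaches $\p\Omega$ the cut-off must have larger and larger derivatives, so the constant from ``multiplication by $\chi$ is bounded on $H^m_{\rm scl}$'' blows up. In fact the inequality $\|g\|_{H^{-m}(\R^n)}\lesim\|g\|_{H^{-m}(\Omega)}$ fails uniformly over compactly supported $g$: already for $n=m=1$, $\Omega=(-1,1)$, the Dirac masses $g_\varepsilon=\delta_{1-\varepsilon}$ satisfy $\|g_\varepsilon\|_{H^{-1}(\R)}=\sqrt\pi$ while $\|g_\varepsilon\|_{H^{-1}(\Omega)}\le\sqrt\varepsilon\to 0$, since any $\phi\in H^1_0(-1,1)$ obeys $|\phi(1-\varepsilon)|\le\sqrt\varepsilon\,\|\phi'\|_{L^2}$.

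The paper simply avoids this conversion. It keeps the $H^{-m}_{\rm scl}(\R^n)$ norm on $\mathcal L_\varphi^* w$, bounds
\[
|L(\mathcal L_\varphi^* w)|=|\langle w,\overline v\rangle_\Omega|\le\|w\|_{H^m_{\rm scl}(\R^n)}\|v\|_{H^{-m}_{\rm scl}(\Omega)}\le\frac{C}{h^m}\|v\|_{H^{-m}_{\rm scl}(\Omega)}\|\mathcal L_\varphi^* w\|_{H^{-m}_{\rm scl}(\R^n)},
\]
extends $L$ by Hahn--Banach to all of $H^{-m}(\R^n)$, and then Riesz produces $u\in H^m(\R^n)$; restricting to $\Omega$ gives the required $u\in H^m(\Omega)$ with $\|u\|_{H^m_{\rm scl}(\Omega)}\le\|u\|_{H^m_{\rm scl}(\R^n)}\lesim h^{-m}\|v\|_{H^{-m}_{\rm scl}(\Omega)}$. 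Note that the statement only asks for $u\in H^m(\Omega)$, not $H^m_0(\Omega)$, so nothing is lost. If you drop the preliminary conversion and run your argument in $H^{-m}(\R^n)$ instead of $H^{-m}(\Omega)$, everything else you wrote goes through verbatim.
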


\begin{proof}
Let $v\in H^{-m}(\Omega)$ and let us consider the following complex linear functional,
$$
L: \mathcal L_{\varphi}^* C_0^\infty(\Omega)\to \C, \quad \mathcal L_{\varphi,\alpha}^* w \mapsto \langle w, \overline{v}\rangle_\Omega. 
$$
By the Carleman estimate \eqref{Carleman est L_{A,q}} for $\mathcal L_{\varphi}^*$, the map $L$ is well-defined.  Let $w\in C_0^\infty(\Omega)$. Then we have
\begin{align*}
|L(\mathcal L_{\varphi}^* w)|=|\langle w, \overline{v}\rangle_\Omega|&\le \|w\|_{H^m_{\rm scl}(\R^n)}\|v\|_{H^{-m}_{\rm scl}(\Omega)}\\
&\le \frac{C}{h^m}\|v\|_{H^{-m}_{\rm scl}(\Omega)}\|\mathcal L_{\varphi}^* w\|_{H^{-m}_{\rm scl}(\R^n)}.
\end{align*}
By the Hahn-Banach theorem, we may extend $L$ to a linear continuous functional $\tilde L$ on $H^{-m}(\R^n)$, without increasing its norm. 
By the Riesz representation theorem, there exists $u\in H^m(\R^n)$ such that for all $\psi\in H^{-m}(\R^n)$,
$$
\tilde L(\psi)=\langle \psi,\overline{u}\rangle_{\R^n}, \quad \textrm{and}\quad \|u\|_{H^m_{\rm scl}(\R^n)}\le \frac{C}{h^m}\|v\|_{H^{-m}_{\rm scl}(\Omega)}. 
$$
Let us now show that $\mathcal L_{\varphi} u=v$ in $\Omega$. To that end, let $w\in C_0^\infty(\Omega)$. Then 
$$
\langle \mathcal L_{\varphi} u,\overline{w}\rangle_\Omega=\langle u,\overline{\mathcal L_{\varphi}^*w}\rangle_{\R^n} =\overline{\tilde L(\mathcal L_{\varphi}^*w)}=\overline{\langle w,\overline{v}\rangle_\Omega}=\langle v,\overline{w}\rangle_\Omega. 
$$
The proof is complete. 
\end{proof}

Our next goal is to construct the complex geometric optics solutions for the equation $\mathcal{L}_{A,q}u=0$ in $\Omega$ with $A\in W^{-\frac{m-2}{2},\frac{2n}{m}}(\R^n,\C^n)\cap \mathcal E'(\overline{\Omega},\C^n)$ and $q\in W^{-\frac{m}{2},\frac{2n}{m}}(\R^n,\C)\cap \mathcal E'(\overline{\Omega},\C)$ using the solvability result Proposition~\ref{Solvability L_{A,q}}. Complex geometric optics solutions are the solutions of the following form,
\begin{equation}\label{form of CGOs}
u(x,\zeta;h)=e^{\frac{ix\cdot \zeta}{h}} (a(x,\zeta)+h^{m/2} r(x,\zeta; h)),
\end{equation}
where $\zeta\in \C^n$ such that $\zeta\cdot \zeta=0$, $|\zeta|\sim 1$, $a\in C^\infty(\overline{\Omega})$ is an amplitude, $r$ is a correction term, and $h>0$ is a small parameter.

Let us conjugate $h^{2m}\mathcal L_{A,q}$ by $e^{ix\cdot\zeta/h}$. We have
\begin{multline}\label{expression for conjugated H_{A,q}}
e^{\frac{-ix\cdot \zeta}{h}} h^{2m}\mathcal L_{A,q} e^{\frac{ix\cdot \zeta}{h}}\\
=(-h^2\Delta-2i\zeta\cdot h\nabla)^m+h^{2m}D_A+h^{2m-1}m_{A\cdot \zeta}+h^{2m}m_q.
\end{multline}
We shall consider $\zeta$ depending slightly on $h$, i.e. $\zeta=\zeta_0+\zeta_1$ with $\zeta_0$ being independent of $h$ and $\zeta_1=\mathcal{O}(h)$ as $h\to 0$. We also assume that $|\Re \zeta_0|=|\Im \zeta_0|=1$. Then we can write \eqref{expression for conjugated H_{A,q}} as follows
\begin{align*}
e^{\frac{-ix\cdot \zeta}{h}}h^{2m}\mathcal L_{A,q} e^{\frac{ix\cdot \zeta}{h}}&=(-h^2\Delta-2i\zeta_0\cdot h\nabla-2i\zeta_1\cdot h\nabla)^m+h^{2m}D_A\\
&\qquad+h^{2m-1}m_{A\cdot(\zeta_0+\zeta_1)}+h^{2m}m_q.
\end{align*}
Then \eqref{form of CGOs} is a solution to $\mathcal L_{A,q}u=0$ if and only if 
\begin{multline}\label{equation for studying order of h}
e^{\frac{-ix\cdot\zeta}{h}} h^{2m} \mathcal{L}_{A,q}(e^{\frac{ix\cdot\zeta}{h}}h^{m/2}r)=-e^{\frac{-ix\cdot \zeta}{h}} h^{2m}\mathcal{L}_{A,q} (e^{\frac{ix\cdot \zeta}{h}} a)\\
=-\sum_{k=0}^{m}\frac{m!}{k!(m-k)!}(-h^2\Delta-2i\zeta_1\cdot h\nabla)^{m-k}(-2i\zeta_0\cdot h\nabla)^{k} a\\
-h^{2m}D_Aa-h^{2m-1}m_{A\cdot(\zeta_0+\zeta_1)}(a)-h^{2m}m_q(a)\quad\textrm{in}\quad\Omega.
\end{multline}
If $a\in C^\infty(\overline{\Omega})$ satisfies
$$
(\zeta_0\cdot\nabla)^{k_0}a=0\quad\textrm{in}\quad\Omega
$$
for some $k_0\ge 1$ integer, then, using the fact that $\zeta_1=\mathcal O(h)$, one can show that the lowest order of $h$ on the right-hand side of \eqref{equation for studying order of h} is $k_0-1+2(m-k_0+1)=2m-k_0+1$. In order to get
$$
\|e^{\frac{-ix\cdot \zeta}{h}} h^{2m}\mathcal{L}_{A,q} (e^{\frac{ix\cdot \zeta}{h}} a)\|_{H^{-m}_{\rm scl}(\Omega)}\le\mathcal{O}(h^{m+m/2}),
$$
we should choose $k_0$ satisfying $2m-k_0+1\ge m+m/2$ and hence $k_0\le (m+2)/2$. Since $m\ge 2$, we should choose $a\in C^\infty(\overline{\Omega})$, satisfying the following transport equation, 
\begin{equation}\label{eq_transport}
(\zeta_0\cdot \nabla)^{2} a=0\quad \textrm{in}\quad \Omega.
\end{equation}
The choice of such $a$ is clearly possible. Having chosen the amplitude $a$ in this way, we obtain the following equation for $r$,
\begin{multline}\label{Hhr=O(h^m+1)}
e^{\frac{-ix\cdot\zeta}{h}} h^{2m} \mathcal{L}_{A,q}(e^{\frac{ix\cdot\zeta}{h}}h^{m/2}r)=-e^{\frac{-ix\cdot \zeta}{h}} h^{2m}\mathcal{L}_{A,q} (e^{\frac{ix\cdot \zeta}{h}} a)\\
=-(-h^2\Delta-2i\zeta_1\cdot h\nabla)^{m}a-m(-h^2\Delta-2i\zeta_1\cdot h\nabla)^{m-1}(-2i\zeta_0\cdot h\nabla) a\\
-h^{2m}D_Aa-h^{2m-1}m_{A\cdot(\zeta_0+\zeta_1)}(a)-h^{2m}m_q(a)=:g\text{ in }\Omega.
\end{multline}
Notice that $g$ belongs to $H^{-m}(\Omega)$ and we would like to estimate $\|g\|_{H^{-m}_{\rm scl}(\Omega)}$. To that end, we let $\psi\in C^\infty_0(\Omega)$ such that $\psi\neq 0$. Then using the fact that $\zeta_1=\mathcal O(h)$, we get by the Cauchy-Schwarz inequality,
\begin{multline}\label{highest order part}
|\((-h^2\Delta-2i\zeta_1\cdot h\nabla)^{m}a,\psi\)_{L^2(\Omega)}|\\
+|\((-h^2\Delta-2i\zeta_1\cdot h\nabla)^{m-1}(-2i\zeta_0\cdot h\nabla)a,\psi\)_{L^2(\Omega)}|\\
\le\mathcal O(h^{2m-1})\|\psi\|_{L^2(\Omega)}\le\mathcal O(h^{2m-1})\|\psi\|_{H^m_{\rm scl}(\Omega)}.
\end{multline}
In the case $m\ge 3$, we use Proposition~\ref{product of Sobolev spaces} with $p=q=2$, $s_1=\frac{m-2}{2}$ and $s_2=\frac{m}{2}$ (assuming $m<n$), to get
\begin{align*}
|\<h^{2m-1}m_{A\cdot(\zeta_0+\zeta_1)}(a),\psi\>_\Omega|&\le Ch^{2m-1}\|A\|_{W^{-\frac{m-2}{2},\frac{2n}{m}}(\R^n)}\|a\psi\|_{W^{\frac{m-2}{2},\frac{2n}{2n-m}}(\R^n)}\\
&\le \mathcal O(h^{2m-1})\|\psi\|_{H^{\frac{m-2}{2}}(\Omega)}\le \mathcal O(h^{\frac{3m}{2}})\|\psi\|_{H^\frac{m}{2}_{\rm scl}(\Omega)}\\
&\le \mathcal O(h^{\frac{3m}{2}})\|\psi\|_{H^m_{\rm scl}(\Omega)}.
\end{align*}
When $m=2$, we use H\"older's inequality, and obtain
\begin{align*}
|\<h^{3}m_{A\cdot(\zeta_0+\zeta_1)}(a),\psi\>_\Omega|&\le Ch^{3}\|A\|_{L^{\frac{2n}{2}}(\R^n)}\|a\psi\|_{L^{\frac{2n}{2n-2}}(\R^n)}\\
&\le \mathcal O(h^3)\|a\|_{L^{\frac{2n}{n-2}}(\R^n)}\|\psi\|_{L^{\frac{2n}{n}}(\R^n)}\\
&\le \mathcal O(h^{3})\|\psi\|_{L^2(\Omega)}\le \mathcal O(h^{3})\|\psi\|_{H^2_{\rm scl}(\Omega)}.
\end{align*}
Therefore, for $m\ge 2$, we get
\begin{equation}\label{Aeta part}
|\<h^{2m-1}m_{A\cdot(\zeta_0+\zeta_1)}(a),\psi\>_\Omega|\le \mathcal O(h^{\frac{3m}{2}})\|\psi\|_{H^m_{\rm scl}(\Omega)}.
\end{equation}

Similarly, in the case $m\ge 3$, we use Proposition~\ref{product of Sobolev spaces} with $p=q=2$, $s_1=\frac{m-2}{2}$ and $s_2=\frac{m}{2}$ (assuming $m<n$), to get
\begin{align*}
|\<h^{2m}D_A(a),\psi\>_\Omega|&\le Ch^{2m}\|A\|_{W^{-\frac{m-2}{2},\frac{2n}{m}}(\R^n)}\|\psi Da\|_{W^{\frac{m-2}{2},\frac{2n}{2n-m}}(\R^n)}\\
&\le \mathcal O(h^{2m})\|\psi\|_{H^{\frac{m}{2}}(\Omega)}\le\mathcal O(h^{\frac{3m}{2}})\|\psi\|_{H^{\frac{m}{2}}_{\rm scl}(\Omega)}\\
&\le\mathcal O(h^{\frac{3m}{2}})\|\psi\|_{H^m_{\rm scl}(\Omega)},
\end{align*}
When $m=2$, we again use H\"older's inequality and Sobolev embeddings $H^{1}(\R^n)\subset L^{\frac{2n}{n-2}}(\R^n)$ (see \cite[Chapter~13, Proposition~6.4]{Taylor3}), and obtain
\begin{align*}
|\<h^{4}D_A(a),\psi\>_\Omega|&\le h^{4}\|A\|_{L^{\frac{2n}{2}}(\R^n)}\|\psi Da\|_{L^{\frac{2n}{2n-2}}(\R^n)}\\
&\le \mathcal O(h^4)\|Da\|_{L^{\frac{2n}{n}}(\R^n)}\|\psi\|_{L^{\frac{2n}{n-2}}(\R^n)}\\
&\le \mathcal O(h^{4})\|\psi\|_{H^{1}(\Omega)}\le \mathcal O(h^{3})\|\psi\|_{H^1_{\rm scl}(\Omega)}.
\end{align*}
Therefore, for $m\ge 2$, we get
\begin{equation}\label{ADa part}
|\<h^{2m-1} D_A(a),\psi\>_\Omega|\le \mathcal O(h^{\frac{3m}{2}})\|\psi\|_{H^m_{\rm scl}(\Omega)}.
\end{equation}

Finally, using Proposition~\ref{product of Sobolev spaces}, we show that
\begin{align*}
|\<h^{2m}m_q(a),\psi\>_\Omega|&\le h^{2m}\|q\|_{W^{-\frac m2,\frac{2n}{m}}(\R^n)}\|a\psi\|_{W^{\frac m2,\frac{2n}{2n-m}}(\R^n)}\\
&\le \mathcal O(h^{2m})\|\psi\|_{H^{\frac m2}(\R^n)}\le \mathcal O(h^{\frac{3m}{2}})\|\psi\|_{H^m_{\rm scl}(\Omega)}.
\end{align*}
Thus, combining this together with the estimates \eqref{highest order part} , \eqref{Aeta part}, \eqref{ADa part} in \eqref{Hhr=O(h^m+1)}, and using \eqref{H^-m characterization} and $m\ge 2$, we can conclude that
$$
\|g\|_{H^{-m}_{\rm scl}(\Omega)}\le\mathcal O(h^{\frac{3m}{2}})\le\mathcal O(h^{m+m/2}).
$$
Thanks to this and Proposition~\ref{Solvability L_{A,q}}, for $h>0$ small enough, there exists a solution $r\in H^m(\Omega)$ of \eqref{Hhr=O(h^m+1)} such that
$$
\|h^{m/2} r\|_{H^m_{\rm scl}(\Omega)}\lesim \frac{1}{h^m}\|e^{\frac{-ix\cdot \zeta}{h}} h^{2m}\mathcal{L}_{A,q} (e^{\frac{ix\cdot \zeta}{h}} a)\|_{H^{-m}_{\rm scl}(\Omega)}=\frac{1}{h^m}\|g\|_{H^{-m}_{\rm scl}(\Omega)}\lesim h^{m/2}.
$$
Therefore, $\|r\|_{H^m_{\rm scl}(\Omega)}=\mathcal O(1)$. The discussion of this section can be summarized in the following proposition.
\begin{Proposition}\label{CGO for L}
Let $\Omega\subset \R^n$, $n\ge 3$, be a bounded open set with $C^\infty$ boundary, and let $m\ge 2$ be an integer such that $n>m$. Suppose that $A \in W^{-\frac{m-2}{2},\frac{2n}{m}}(\R^n,\C^n)\cap\mathcal E'(\overline \Omega)$ and $q \in W^{-\frac{m}{2},\frac{2n}{m}}(\R^n,\C)\cap\mathcal E'(\overline \Omega)$, and let $\zeta\in \C^n$ be such that $\zeta\cdot\zeta=0$, $\zeta=\zeta_0+\zeta_1$ with $\zeta_0$ being independent of $h>0$, $|\Re\zeta_0|=|\Im \zeta_0|=1$, and $\zeta_1=\mathcal{O}(h)$ as $h\to 0$. Then for all $h>0$ small enough, there exists a solution $u(x,\zeta;h)\in H^m(\Omega)$ to the equation $\mathcal L_{A,q}u=0$ in $\Omega$, of the form
$$
u(x,\zeta;h)=e^{ix\cdot\zeta/h}(a(x,\zeta_0)+h^{m/2}r(x,\zeta;h)),
$$
where the function  $a(\cdot,\zeta_0)\in C^\infty(\overline\Omega)$ satisfies \eqref{eq_transport} and the remainder term $r$ is such that $\|r\|_{H^m_{\emph{\rm scl}}(\Omega)}=\mathcal O(1)$ as $h\to 0$.
\end{Proposition}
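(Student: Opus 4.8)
The plan is to verify the statement by constructing $u$ explicitly in the announced form and reducing the existence of the remainder $r$ to the solvability result of Proposition~\ref{Solvability L_{A,q}}. Write $u(x,\zeta;h)=e^{ix\cdot\zeta/h}(a(x,\zeta_0)+h^{m/2}r(x,\zeta;h))$. First I would conjugate $h^{2m}\mathcal L_{A,q}$ by the exponential: expanding the power of $-h^2\Delta$ and using $\zeta\cdot\zeta=0$ gives
$$
e^{-ix\cdot\zeta/h}\,h^{2m}\mathcal L_{A,q}\,e^{ix\cdot\zeta/h}=(-h^2\Delta-2i\zeta\cdot h\nabla)^m+h^{2m}D_A+h^{2m-1}m_{A\cdot\zeta}+h^{2m}m_q,
$$
and, splitting $\zeta=\zeta_0+\zeta_1$ with $\zeta_0$ independent of $h$ and $\zeta_1=\mathcal O(h)$, I would isolate the $\zeta_0$-contributions. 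Then $u$ solves $\mathcal L_{A,q}u=0$ in $\Omega$ if and only if $r$ satisfies an equation whose right-hand side $g\in H^{-m}(\Omega)$ equals $-e^{-ix\cdot\zeta/h}h^{2m}\mathcal L_{A,q}(e^{ix\cdot\zeta/h}a)$.

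Next I would fix the amplitude. Expanding the polyharmonic part with the binomial theorem, the lowest power of $h$ surviving on the right-hand side is governed by the order of vanishing of $\zeta_0\cdot\nabla$ on $a$: if $(\zeta_0\cdot\nabla)^{k_0}a=0$ in $\Omega$, then that power is $2m-k_0+1$. To force $g=\mathcal O(h^{m+m/2})$ in the relevant norm I need $2m-k_0+1\ge 3m/2$, i.e. $k_0\le(m+2)/2$; since $m\ge2$ it suffices to take $a\in C^\infty(\overline\Omega)$ satisfying the transport equation \eqref{eq_transport}, which is always possible. With this choice the only surviving top-order terms are $(-h^2\Delta-2i\zeta_1\cdot h\nabla)^m a$ and $m(-h^2\Delta-2i\zeta_1\cdot h\nabla)^{m-1}(-2i\zeta_0\cdot h\nabla)a$; since $a$ is smooth and $\zeta_1=\mathcal O(h)$, pairing them with $\psi\in C_0^\infty(\Omega)$ and using Cauchy--Schwarz yields a bound $\mathcal O(h^{2m-1})\|\psi\|_{H^m_{\rm scl}(\Omega)}$.

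The main work is the estimate of the lower-order terms $h^{2m}D_Aa$, $h^{2m-1}m_{A\cdot\zeta}a$ and $h^{2m}m_qa$ in $H^{-m}_{\rm scl}(\Omega)$. Using the duality characterization \eqref{H^-m characterization} of the negative semiclassical norm, it is enough to bound $|\langle\,\cdot\,,\psi\rangle_\Omega|$ for $\psi\in C_0^\infty(\Omega)$ by a constant times $h^{3m/2}\|\psi\|_{H^m_{\rm scl}(\Omega)}$. For $h^{2m}D_Aa$, pairing with $\psi$ gives $h^{2m}\langle A,\psi\,Da\rangle_{\R^n}$, and the duality between $W^{-\frac{m-2}{2},\frac{2n}{m}}$ and $W^{\frac{m-2}{2},\frac{2n}{2n-m}}$ reduces matters to controlling $\|\psi\,Da\|_{W^{(m-2)/2,\,2n/(2n-m)}(\R^n)}$; since $Da\in C^\infty(\overline\Omega)$, the product estimate Proposition~\ref{product of Sobolev spaces} with $p=q=2$, $s_1=\frac{m-2}{2}$, $s_2=\frac m2$ bounds this by $C\|\psi\|_{H^{m/2}(\R^n)}$ --- and this is exactly where the hypothesis $n>m$ is used, to ensure $s_2<n/2$. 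Converting $H^{m/2}$ to $H^{m/2}_{\rm scl}$ and then enlarging to $H^m_{\rm scl}$ costs powers of $h$, leaving the net bound $\mathcal O(h^{3m/2})\|\psi\|_{H^m_{\rm scl}(\Omega)}$. The terms $m_{A\cdot\zeta}a$ and $m_qa$ are treated identically (the last with $s_1=s_2=\frac m2$). The borderline case $m=2$ is the delicate point, since then $s_1=0$ and Proposition~\ref{product of Sobolev spaces} is unavailable; there I would replace it by H\"older's inequality together with the Sobolev embedding $H^1(\R^n)\hookrightarrow L^{2n/(n-2)}(\R^n)$, which yields the same estimate. Collecting all contributions gives $\|g\|_{H^{-m}_{\rm scl}(\Omega)}\lesssim h^{3m/2}\le h^{m+m/2}$.

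Finally, Proposition~\ref{Solvability L_{A,q}} applied to $g\in H^{-m}(\Omega)$ furnishes, for $h$ small enough, a solution $r\in H^m(\Omega)$ of the $r$-equation with $\|h^{m/2}r\|_{H^m_{\rm scl}(\Omega)}\lesssim h^{-m}\|g\|_{H^{-m}_{\rm scl}(\Omega)}\lesssim h^{m/2}$, hence $\|r\|_{H^m_{\rm scl}(\Omega)}=\mathcal O(1)$ as $h\to0$, which completes the construction. I expect the only genuine obstacle to be the low-order perturbation estimate just outlined: dovetailing the Sobolev-space duality with the product theorem --- and supplying the separate $m=2$ argument --- is what pins down the precise regularity hypotheses on $A$ and $q$; once those bounds are secured, everything else is bookkeeping with powers of $h$.
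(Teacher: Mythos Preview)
Your proposal is correct and follows essentially the same approach as the paper: conjugate $h^{2m}\mathcal L_{A,q}$ by $e^{ix\cdot\zeta/h}$, choose $a$ via the transport equation \eqref{eq_transport} after the same power-counting argument, estimate each piece of $g$ in $H^{-m}_{\rm scl}(\Omega)$ by duality against $\psi\in C_0^\infty(\Omega)$ using Proposition~\ref{product of Sobolev spaces} (with the separate H\"older/Sobolev-embedding argument when $m=2$), and then invoke Proposition~\ref{Solvability L_{A,q}} to produce $r$ with $\|r\|_{H^m_{\rm scl}(\Omega)}=\mathcal O(1)$. The parameters you quote for the product estimate and the role of the hypothesis $n>m$ match the paper exactly.
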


\section{Proof of Theorem~\ref{main th}}\label{Proof}
The first ingredient in the proof of Theorem~\ref{main th} is a standard reduction to a larger domain; see \cite{SyU}. For the proof we follow \cite[Proposition~3.2]{KU} and \cite[Lemma~4.2]{S}.

\begin{Proposition}\label{reduction to larger domain}
Let $\Omega,\Omega'\subset\R^n$ be two bounded opens sets such that $\Omega\subset\subset \Omega'$ and $\p\Omega$ being $C^\infty$. Let $A_1,A_2\in W^{-\frac{m-2}{2},\frac{2n}{m}}(\R^n,\C^n)\cap\mathcal E'(\overline \Omega)$ and $q_1,q_2\in W^{-\frac{m}{2},\frac{2n}{m}}(\R^n,\C)\cap\mathcal E'(\overline \Omega)$. If $\mathcal N_{A_1,q_1}=\mathcal N_{A_2,q_2}$, then $\mathcal N_{A_1,q_1}'=\mathcal N_{A_2,q_2}'$, where $\mathcal N_{A_j,q_j}'$ denotes the set of the Dirichlet-to-Neumann map for $\mathcal L_{A_j,q_j}$ in $\Omega'$, $j=1,2$.
\end{Proposition}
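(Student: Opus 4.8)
The plan is to extend the coefficients by zero to $\Omega'$ (which is legitimate since $A_j, q_j \in \mathcal E'(\overline\Omega)$ already have support in $\overline\Omega \subset\subset \Omega'$), and then to show that the solution operator and the Dirichlet-to-Neumann map for $\mathcal L_{A_j,q_j}$ on $\Omega'$ can be reconstructed from the corresponding objects on $\Omega$ together with the harmless polyharmonic operator $(-\Delta)^m$ on the collar region $\Omega'\setminus\overline\Omega$. First I would fix $f \in \prod_{j=0}^{m-1} H^{m-j-1/2}(\p\Omega')$ and let $u' \in H^m(\Omega')$ solve $\mathcal L_{A_j,q_j} u' = 0$ in $\Omega'$ with $\gamma_{\p\Omega'} u' = f$. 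Restricting to $\Omega$, the function $u'|_\Omega$ solves $\mathcal L_{A_j,q_j}(u'|_\Omega) = 0$ in $\Omega$ with some Cauchy data $\gamma_{\p\Omega}(u'|_\Omega) = g_j$ on $\p\Omega$, and the $\Omega$-Dirichlet-to-Neumann map applied to $g_j$ is, by the defining identity \eqref{N_A,q} together with an integration-by-parts/Green-type formula on $\Omega$, exactly the data read off from $u'$ at $\p\Omega$ from the $\Omega$ side.

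The key step is to argue that $g_1 = g_2$, i.e. the Cauchy data induced on $\p\Omega$ is the same for both coefficient pairs. This is where the hypothesis $\mathcal N_{A_1,q_1} = \mathcal N_{A_2,q_2}$ and the equality of coefficients on $\Omega'\setminus\overline\Omega$ (both zero) enter. I would let $u_1' , u_2'$ be the two $\Omega'$-solutions with the same boundary value $f$, set $w = u_1' - u_2'$, and observe that in the collar $\Omega'\setminus\overline\Omega$ both $u_1'$ and $u_2'$ satisfy $(-\Delta)^m u = 0$ since the coefficients vanish there. The standard argument (see \cite{KU}, \cite{S}) is: define $v$ on $\Omega'$ by solving $\mathcal L_{A_2,q_2} v = 0$ in $\Omega$ with the Cauchy data of $u_1'|_\Omega$ on $\p\Omega$ (using that $0$ is not in the spectrum, via Appendix~\ref{appendix b} well-posedness applied with full Cauchy data — here one must be slightly careful and instead run the argument through equality of the sets of Cauchy data $C_{A_1,q_1} = C_{A_2,q_2}$ on $\p\Omega$, which follows from $\mathcal N_{A_1,q_1} = \mathcal N_{A_2,q_2}$ when $0$ is not a Dirichlet eigenvalue), then glue $v$ with $u_1'$ across $\p\Omega$; the glued function lies in $H^m(\Omega')$ because the full Dirichlet trace matches, solves $\mathcal L_{A_2,q_2}(\cdot) = 0$ on $\Omega'$, and has boundary value $f$ on $\p\Omega'$, so by uniqueness it equals $u_2'$. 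Reading off boundary data at $\p\Omega'$ then gives $\mathcal N_{A_1,q_1}' f = \mathcal N_{A_2,q_2}' f$.

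I would present the equality $C_{A_1,q_1} = C_{A_2,q_2}$ on $\p\Omega$ (as sets of Cauchy data $(\gamma u, \mathcal N u)$) as the bridge: $\mathcal N_{A_1,q_1} = \mathcal N_{A_2,q_2}$ gives it immediately under the spectral assumption, and it is exactly what is needed for the gluing. The main technical point to verify — and the place I expect the real work — is that gluing two $H^m$ functions across the smooth hypersurface $\p\Omega$ along which all $m$ Dirichlet traces ($u, \p_\nu u, \dots, \p_\nu^{m-1}u$) agree produces a genuine $H^m(\Omega')$ function with no distributional contribution supported on $\p\Omega$ when one applies $(-\Delta)^m$, so that the glued function solves the equation globally in $\Omega'$; this is where the full order-$m$ trace (not just $u|_{\p\Omega}$ and $\p_\nu u|_{\p\Omega}$) is essential, and one invokes the characterization of $H^m_0$ and the transmission-type condition for $2m$-th order operators. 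Everything else — the integration by parts identifying the bilinear-form definition of $\mathcal N$ with the actual normal-derivative data, and the bookkeeping of which space each trace lives in — is routine given Appendix~\ref{appendix b}.
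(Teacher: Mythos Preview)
Your approach is essentially the paper's: construct the $\Omega'$-solution for $(A_2,q_2)$ by gluing $u_1'$ in the collar with the $(A_2,q_2)$-solution on $\Omega$ having Dirichlet data $\gamma(u_1'|_\Omega)$. The paper writes the glue as $u_2'=u_1'+\varphi$ with $\varphi=u_2-u_1\in H^m_0(\Omega)\subset H^m_0(\Omega')$, which makes $u_2'\in H^m(\Omega')$ immediate without any trace-matching discussion.

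One point deserves correction. Matching the $m$ Dirichlet traces across $\p\Omega$ gives $u_2'\in H^m(\Omega')$, but does \emph{not} by itself rule out a distributional contribution on $\p\Omega$ when you apply $(-\Delta)^m$; that cancellation is precisely the Neumann-data matching $\mathcal N_{A_1,q_1}(\gamma u_1)=\mathcal N_{A_2,q_2}(\gamma u_2)$, not a consequence of $H^m$-gluing. Moreover, in this regularity regime (solutions only in $H^m$, coefficients in negative-order Sobolev spaces) there are no classical traces $\p_\nu^m u,\dots,\p_\nu^{2m-1}u$, so the ``transmission condition for $2m$-th order operators'' you invoke has no pointwise meaning and must be read weakly. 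The paper therefore bypasses transmission language entirely and verifies $\mathcal L_{A_2,q_2}u_2'=0$ in $\Omega'$ by direct computation: test against $\psi\in C_0^\infty(\Omega')$, split the principal bilinear form over $\Omega$ and $\Omega'\setminus\overline\Omega$, recognize the $\Omega$-piece as $\langle \mathcal N_{A_2,q_2}f,\gamma(\psi|_\Omega)\rangle_{\p\Omega}$ via the defining identity \eqref{N_A,q}, replace it by $\langle \mathcal N_{A_1,q_1}f,\gamma(\psi|_\Omega)\rangle_{\p\Omega}$ using the hypothesis, and reassemble into $\langle \mathcal L_{A_1,q_1}u_1',\psi\rangle_{\Omega'}=0$. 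Once you unpack your transmission argument in the weak sense it becomes exactly this computation, so your outline is correct but the paper's presentation is the honest way to execute it here.
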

\begin{proof}
Let $f'\in \prod_{j=0}^m H^{m-j-1/2}(\p\Omega')$ and let $u_1'\in H^{m}(\Omega')$ be a unique solution to $\mathcal L_{A_1,q_1}u_1'=0$ in $\Omega'$ with $\gamma u'_1=f'$ on $\p\Omega'$, where $\gamma'$ denotes the Dirichlet trace on $\p\Omega'$. Let $u_1=u_1'|_{\Omega}\in H^{m}(\Omega)$ and $f=\gamma u_1$. Since $\mathcal N_{A_1,q_1}=\mathcal N_{A_2,q_2}$, we can guarantee the existence of $u_2\in H^{m}(\Omega)$ satisfying $\mathcal L_{A_2,q_2}u_2=0$ and $\gamma u_2=f$. In particular $\varphi:=u_2-u_1\in H^{m}_0(\Omega)\subset H^{m}_0(\Omega')$. We define
\begin{equation}\label{definition of u'_2}
u_2'=u_1'+\varphi\in H^{m}(\Omega'),
\end{equation}
so that we have $u'_2=u_2$ in $\Omega$. It follows that $\gamma' u_2'=\gamma' u_1'=f'$ on $\p\Omega'$.
\medskip

Next, we show that $u_2'$ satisfies $\mathcal L_{A_2,q_2}u'_2=0$ in $\Omega'$. For this, let $\psi\in C^\infty_0(\Omega')$. Then we have
$$
\<\mathcal L_{A_2,q_2}u'_2,\psi\>_{\Omega'}=\sum_{|\alpha|=m}\frac{m!}{\alpha!}(D^\alpha u_2',\overline{D^\alpha \psi})_{L^2(\Omega')}+\<D_{A_2}(u_2'),\psi\>_{\Omega'}+\<m_{q_2}(u_2'),\psi\>_{\Omega'}.
$$
Since $A_2=0$ and $q_2=0$ outside of $\overline{\Omega}$, by \eqref{definition of u'_2}, with $\varphi\in H^m_0(\Omega)$, we can rewrite the above equality as
\begin{align*}
\<\mathcal L_{A_2,q_2}u'_2,\psi\>_{\Omega'}&=\sum_{|\alpha|=m}\frac{m!}{\alpha!}(D^\alpha u_1',\overline{D^\alpha \psi})_{L^2(\Omega')}+\sum_{|\alpha|=m}\frac{m!}{\alpha!}(D^\alpha \varphi,\overline{D^\alpha(\psi|_\Omega)})_{L^2(\Omega)}\\
&\qquad+B_{A_2}(u_2',\psi|_{\Omega})+b_{q_2}(u_2',\psi|_{\Omega})\\
&=\sum_{|\alpha|=m}\frac{m!}{\alpha!}(D^\alpha u_1',\overline{D^\alpha \psi})_{L^2(\Omega')}-\sum_{|\alpha|=m}\frac{m!}{\alpha!}(D^\alpha u_1,\overline{D^\alpha(\psi|_\Omega)})_{L^2(\Omega)}\\
&\qquad+\sum_{|\alpha|=m}\frac{m!}{\alpha!}(D^\alpha u_2,\overline{D^\alpha(\psi|_\Omega)})_{L^2(\Omega)}+B_{A_2}(u_2,\psi|_{\Omega})\\
&\qquad+b_{q_2}(u_2,\psi|_{\Omega}).
\end{align*}
Note that
\begin{align*}
\<\mathcal N_{A_2,q_2}f,\gamma(\psi|_{\Omega})\>_{\p\Omega}&=\sum_{|\alpha|=m}\frac{m!}{\alpha!}(D^\alpha u_2,\overline{D^\alpha(\psi|_\Omega)})_{L^2(\Omega)}\\
&\qquad+B_{A_2}(u_2,\psi|_{\Omega})+b_{q_2}(u_2,\psi|_{\Omega}).
\end{align*}
Therefore, we have
\begin{align*}
\<\mathcal L_{A_2,q_2}u'_2,\psi\>_{\Omega'}&=\sum_{|\alpha|=m}\frac{m!}{\alpha!}(D^\alpha u_1',\overline{D^\alpha \psi})_{L^2(\Omega')}-\sum_{|\alpha|=m}\frac{m!}{\alpha!}(D^\alpha u_1,\overline{D^\alpha(\psi|_\Omega)})_{L^2(\Omega)}\\
&\qquad+\<\mathcal N_{A_2,q_2}f,\psi|_{\p\Omega}\>_{\p\Omega}.
\end{align*}
Since $\mathcal N_{A_1,q_1}=\mathcal N_{A_2,q_2}$ and since
\begin{align*}
\<\mathcal N_{A_1,q_1}f,\gamma(\psi|_{\Omega})\>_{\p\Omega}&=\sum_{|\alpha|=m}\frac{m!}{\alpha!}(D^\alpha u_1,\overline{D^\alpha(\psi|_\Omega)})_{L^2(\Omega)}\\
&\qquad+B_{A_1}(u_1,\psi|_{\Omega})+b_{q_1}(u_1,\psi|_{\Omega}),
\end{align*}
we come to
\begin{align*}
\<\mathcal L_{A_2,q_2}u'_2,\psi\>_{\Omega'}&=\sum_{|\alpha|=m}\frac{m!}{\alpha!}(D^\alpha u_1',\overline{D^\alpha \psi})_{L^2(\Omega')}+B_{A_1}(u_1,\psi|_{\Omega})+b_{q_1}(u_1,\psi|_{\Omega}).
\end{align*}
Using that $A_1=0$ and $q_1=0$ outside $\overline\Omega$, we obtain
\begin{align*}
\<\mathcal L_{A_2,q_2}u'_2,\psi\>_{\Omega'}&=\sum_{|\alpha|=m}\frac{m!}{\alpha!}(D^\alpha u_1',\overline{D^\alpha \psi})_{L^2(\Omega')}+\<D_{A_1}(u_1'),\psi\>_{\Omega'}+\<m_{q_1}(u_1'),\psi\>_{\Omega'}\\
&=\<\mathcal L_{A_1,q_1}u'_1,\psi\>_{\Omega'}=0.
\end{align*}
This shows that $\mathcal L_{A_2,q_2}u'_2=0$ in $\Omega'$.
\medskip

Using the analogous arguments one can show that $\mathcal N_{A_2,q_2}'f'=\mathcal N_{A_1,q_1}'f'$ on $\p\Omega'$, which finishes the proof.
\end{proof}

The second ingredient is the derivation of the following integral identity based on the assumption that $\mathcal N_{A_1,q_1}=\mathcal N_{A_2,q_2}$.

\begin{Proposition}\label{integral identity prop}
Let $\Omega\subset\R^n$, $n\ge 3$, be a bounded open set with $C^\infty$ boundary. Assume that $A_1,A_2\in W^{-\frac{m-2}{2},\frac{2n}{m}}(\R^n,\C^n)\cap\mathcal E'(\overline \Omega)$ and $q_1,q_2\in W^{-\frac{m}{2},\frac{2n}{m}}(\R^n,\C)\cap\mathcal E'(\overline \Omega)$. If $\mathcal N_{A_1,q_1}=\mathcal N_{A_2,q_2}$, then the following integral identity holds
\begin{equation}\label{integral identity}
\begin{aligned}
0&=B_{A_1-A_2}(u_1,\overline u_2)+b_{q_1-q_2}(u_1,\overline{u_2})\\
&=(D_{A_1-A_2}(u_1),\overline u_2)_\Omega+(m_{q_1-q_2}(u_1),\overline u_2)_\Omega
\end{aligned}
\end{equation}
for any $u_1,u_2\in H^m(\Omega)$ satisfying $\mathcal L_{A_1,q_1}u_1=0$ in $\Omega$ and $\mathcal L^*_{{A}_2,{q}_2}u_2=0$ in $\Omega$, respectively.
\end{Proposition}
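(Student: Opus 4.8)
The plan is to run the classical Sylvester--Uhlmann reduction, but carried out entirely at the level of the bilinear forms $B_A$, $b_q$ and the definition \eqref{N_A,q} of the Dirichlet-to-Neumann map, since the low regularity of the coefficients rules out any use of a classical Green's formula for $(-\Delta)^m$.

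First I would set $f:=\gamma u_1$. Since $0$ is not in the spectrum of $\mathcal L_{A_2,q_2}$, the Dirichlet problem \eqref{DP} for $\mathcal L_{A_2,q_2}$ with data $f$ has a unique solution $\tilde u_2\in H^m(\Omega)$, and since $\mathcal N_{A_1,q_1}=\mathcal N_{A_2,q_2}$ we have $\gamma(u_1-\tilde u_2)=0$, so $w:=u_1-\tilde u_2\in H^m_0(\Omega)$. Next I take the given $u_2$ with $\mathcal L^*_{A_2,q_2}u_2=0$, set $h:=\gamma u_2$, and use $v_h:=u_2$ as an admissible extension of $h$. Substituting this choice into \eqref{N_A,q} once for $\mathcal N_{A_1,q_1}$ (with the solution $u_1$) and once for $\mathcal N_{A_2,q_2}$ (with the solution $\tilde u_2$), and using that \eqref{N_A,q} does not depend on the choice of extension (Appendix~\ref{appendix b}), the hypothesis $\mathcal N_{A_1,q_1}=\mathcal N_{A_2,q_2}$ gives
$$
\sum_{|\alpha|=m}\frac{m!}{\alpha!}(D^\alpha u_1,\overline{D^\alpha u_2})_{L^2(\Omega)}+B_{A_1}(u_1,\overline{u_2})+b_{q_1}(u_1,\overline{u_2})=\sum_{|\alpha|=m}\frac{m!}{\alpha!}(D^\alpha \tilde u_2,\overline{D^\alpha u_2})_{L^2(\Omega)}+B_{A_2}(\tilde u_2,\overline{u_2})+b_{q_2}(\tilde u_2,\overline{u_2}).
$$
Writing $\tilde u_2=u_1-w$, cancelling the polyharmonic terms in $u_1$, and regrouping the $A_2$- and $q_2$-terms by linearity, this rearranges to
$$
B_{A_1-A_2}(u_1,\overline{u_2})+b_{q_1-q_2}(u_1,\overline{u_2})=-\Big(\sum_{|\alpha|=m}\frac{m!}{\alpha!}(D^\alpha w,\overline{D^\alpha u_2})_{L^2(\Omega)}+B_{A_2}(w,\overline{u_2})+b_{q_2}(w,\overline{u_2})\Big).
$$

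It then remains to show that the right-hand side vanishes. For $w\in C^\infty_0(\Omega)$ the bracket is, by the definitions of $D_{A_2}$, $m_{q_2}$ and the weak action of $(-\Delta)^m$, precisely $\langle\mathcal L_{A_2,q_2}w,\overline{u_2}\rangle_\Omega$, and by the formal adjoint identity $\langle\mathcal L_{A_2,q_2}w,\overline{u_2}\rangle_\Omega=\langle w,\overline{\mathcal L^*_{A_2,q_2}u_2}\rangle_\Omega$ of Proposition~\ref{adjoints of D_A and m_q} this equals $0$, because $\mathcal L^*_{A_2,q_2}u_2=0$ in $\Omega$. For an arbitrary $w\in H^m_0(\Omega)$ I would approximate $w$ by functions in $C^\infty_0(\Omega)$ in the $H^m(\Omega)$-norm and pass to the limit, using the continuity of $B_{A_2}$ and $b_{q_2}$ on $H^m(\Omega)$ (Proposition~\ref{m_q and D_A are bounded maps on Omega}) and the boundedness of $D_{A_2},m_{q_2}\colon H^m(\Omega)\to H^{-m}(\Omega)$ (Corollary~\ref{m_q and D_A are well-defined on HOmega}). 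This yields the first equality in \eqref{integral identity}; the second equality is just a restatement via the defining relations of $D_{A_1-A_2}$ and $m_{q_1-q_2}$ with $B_{A_1-A_2}$ and $b_{q_1-q_2}$.

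The step I expect to be the main obstacle is this last one: one must make sure that the forms built from the negative-order distributions $A_2\in W^{-\frac{m-2}{2},\frac{2n}{m}}$ and $q_2\in W^{-\frac m2,\frac{2n}{m}}$ behave correctly under the weak adjoint and under the approximation $C^\infty_0(\Omega)\to H^m_0(\Omega)$ in $H^m(\Omega)$, which is exactly where the mapping properties of $D_A$, $m_q$ and the well-posedness results of the Appendices are used. Everything else is the standard integral-identity computation, so once those ingredients are invoked the identity follows.
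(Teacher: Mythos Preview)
Your proposal is correct and follows essentially the same route as the paper: introduce an auxiliary solution $\tilde u_2$ (the paper's $v_2$) of $\mathcal L_{A_2,q_2}$ with Dirichlet data $\gamma u_1$, expand the equality $\mathcal N_{A_1,q_1}=\mathcal N_{A_2,q_2}$ via \eqref{N_A,q} with $u_2$ as the extension, and kill the residual bracket in $w=u_1-\tilde u_2\in H^m_0(\Omega)$ using the weak form of $\mathcal L^*_{A_2,q_2}u_2=0$ together with the adjoint identities of Proposition~\ref{adjoints of D_A and m_q} and a density argument. The only cosmetic point is that the expression $\langle\mathcal L_{A_2,q_2}w,\overline{u_2}\rangle_\Omega$ should be read as the sesquilinear form $a(w,u_2)$ (since $\overline{u_2}\notin H^m_0(\Omega)$), which is exactly how the paper phrases it via \eqref{Lu=0 in weak sense}.
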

Recall that $\mathcal L_{A,q}^*=\mathcal L_{\overline{A},\overline{q}+D\cdot \overline A}$ is the formal adjoint of $\mathcal L_{A,q}$.
\begin{proof}
Since $u_2\in H^m(\Omega)$ satisfies $\mathcal L_{-A_2,q_2-D\cdot A_2}\overline{u}_2=0$, the following
\begin{equation}\label{Lu=0 in weak sense}
\begin{aligned}
0=\<\mathcal L_{-A_2,q_2-D\cdot A_2}\overline{u}_2,\psi\>_\Omega&=\sum_{|\alpha|=m}\frac{m!}{\alpha!}(D^\alpha\overline u_2,D^\alpha \psi)_{L^2(\Omega)}-\<D_{A_2}(\overline u_2),\psi\>_\Omega\\
&\qquad+\<m_{q_2-D\cdot A_2}(\overline u_2),\psi\>_\Omega,
\end{aligned}
\end{equation}
holds for every $\psi\in C^\infty_0(\Omega)$. Density and continuity imply that \eqref{Lu=0 in weak sense} holds also for all $\psi\in H^m_0(\Omega)$.
\medskip

The hypothesis that $\mathcal N_{A_1,q_1}=\mathcal N_{A_2,q_2}$ implies the existence of $v_2\in H^m(\Omega)$ satisfying $\mathcal L_{{A_2},{q_2}}v_2=0$ in $\Omega$ such
$$
\gamma u_1=\gamma v_2\quad\text{and}\quad \mathcal N_{A_1,q_1}\gamma u_1=\mathcal N_{A_2,q_2}\gamma v_2.
$$
Then $\psi=u_1-v_2\in H^m_0(\Omega)$. Hence, applying \eqref{Lu=0 in weak sense} we obtain
\begin{equation}\label{pre int identity}
\begin{aligned}
0&=\sum_{|\alpha|=m}\frac{m!}{\alpha!}(D^\alpha\overline u_2,D^\alpha (u_1-v_2))_{L^2(\Omega)}-\<D_{A_2}(\overline u_2),(u_1-v_2)\>_\Omega\\
&\qquad+\<m_{q_2-D\cdot A_2}(\overline u_2),(u_1-v_2)\>_\Omega.
\end{aligned}
\end{equation}
The equality $\<\mathcal N_{A_1,q_1}\gamma_1,\gamma\overline u_2\>_{\p\Omega}=\<\mathcal N_{A_2,q_2}\gamma v_2,\gamma\overline u_2\>_{\p\Omega}$ together with the definition \eqref{N_A,q} gives
\begin{align*}
0&=\sum_{|\alpha|=m}\frac{m!}{\alpha!}(D^\alpha (u_1-v_2),D^\alpha\overline u_2)_{L^2(\Omega)}-\(B_{A_1}(u_1,\overline u_2)-B_{A_2}(v_2,\overline u_2)\)\\
&\qquad+b_{q_1}(u_1,\overline u_2)-b_{q_2}(v_2,\overline u_2).
\end{align*}
Combining this with \eqref{pre int identity} and using Proposition~\ref{adjoints of D_A and m_q}, we derive the integral identity \eqref{integral identity} as desired.
\end{proof}

Let $A_1,A_2\in W^{-\frac{m-2}{2},\frac{2n}{m}}(\R^n,\C^n)\cap\mathcal E'(\overline \Omega)$ and $q_1,q_2\in W^{-\frac{m}{2}+\delta,\frac{2n}{m}}(\R^n,\C)\cap\mathcal E'(\overline \Omega)$, $0<\delta<1/2$, as in  the statement of Theorem~\ref{main th}. In order to show that $A_1=A_2$, we will need to use the Poincar\'e lemma for currents \cite{deRham}. We need this reduction to apply the Poincar\'e lemma for currents , which requires the domain to be simply connected. Therefore, we reduce the problem to a larger simply connected domain. In particular, to a ball.
\medskip

Let $B$ be an open ball in $\R^n$ such that $\Omega\subset\subset B$. According to Proposition~\ref{reduction to larger domain}, we know that $\mathcal N_{A_1,q_1}^B=\mathcal N_{A_2,q_2}^B$, where $\mathcal N_{A_j,q_j}^B$ denotes the Dirichlet-to-Neumann map for $\mathcal L_{A_j,q_j}$ in $B$, $j=1,2$. Now, by Proposition~\ref{integral identity prop}, the following integral identity holds
\begin{equation}\label{integral identity'}
B_{A_1-A_2}^B(u_1,\overline u_2)+b_{q_1-q_2}^B(u_1,\overline u_2)=0
\end{equation}
for any $u_1,u_2\in H^m(B)$ satisfying $\mathcal L_{A_1,q_1}u_1=0$ in $B$ and $\mathcal L^*_{{A}_2,{q}_2}u_2=0$ in $B$, respectively. Here and in what follows, by $B_{A_1-A_2}^B$ and $b_{q_1-q_2}^B$ we denote the bilinear forms corresponding to $A_1-A_2$ and $q_1-q_2$, respectively, defined (by means of \eqref{def of B_A b_q on Omega}) in the ball $B$.
\medskip

The main idea of the proof of Theorem~\ref{main th} is to use the integral identity \eqref{integral identity'} with $u_1,u_2\in H^m(B)$ being complex geometric optics solutions for the equations $\mathcal L_{A_1,q_1}u_1=0$ in $B$ and $\mathcal L^*_{{A}_2,{q}_2}u_2=0$ in $B$, respectively. In order to construct these solutions, consider $\xi,\mu_1,\mu_2\in\R^n$ such that $|\mu_1|=|\mu_2|=1$ and $\mu_1\cdot\mu_2=\xi\cdot\mu_1=\xi\cdot\mu_2=0$. For $h>0$, we set
$$
\zeta_1=\frac{h\xi}{2}+\sqrt{1-h^2\frac{|\xi|^2}{4}}\mu_1+i\mu_2,\quad \zeta_2=-\frac{h\xi}{2}+\sqrt{1-h^2\frac{|\xi|^2}{4}}\mu_1-i\mu_2.
$$
So we have $\zeta_j\cdot\zeta_j=0$, $j=1,2$, and $\zeta_1-\overline{\zeta}_2=h\xi$.
\medskip

By Proposition~\ref{CGO for L}, for all $h > 0$ small enough, there are solutions $u_1(\cdot, \zeta_1; h)$ and $u_2(\cdot, \zeta_2; h)$ in $H^m(B)$ to the equations
$$
\mathcal L_{A_1,q_1}u_1=0\text{ in }B\text{ and }\mathcal L^*_{{A}_2,{q}_2}u_2=0\text{ in }B,
$$
respectively, of the form
\begin{equation}\label{solutions}
\begin{aligned}
u_1(x,\zeta_1;h)&=e^{ix\cdot\zeta_1/h}(a_1(x,\mu_1+i\mu_2)+h^{m/2}r_1(x,\zeta_1;h))\\
u_2(x,\zeta_2;h)&=e^{ix\cdot\zeta_2/h}(a_2(x,\mu_1-i\mu_2)+h^{m/2}r_2(x,\zeta_2;h)),
\end{aligned}
\end{equation}
where the amplitudes $a_1(x,\mu_1+i\mu_2),a_2(x,\mu_1-i\mu_2)\in C^\infty(\overline B)$ satisfy the transport equations
\begin{equation}\label{transport equations 1}
((\mu_1+i\mu_2)\cdot\nabla)^2 a_1(x,\mu_1+i\mu_2)=0,\quad \textrm{in}\quad B,
\end{equation}
and
\begin{equation}\label{transport equations 2}
((\mu_1-i\mu_2)\cdot\nabla)^2 a_2(x,\mu_1-i\mu_2)=0,\quad \textrm{in}\quad B,
\end{equation}
and the remainder terms $r_1(\cdot,\zeta_1;h)$ and $r_2(\cdot,\zeta_2;h)$ satisfy
\begin{equation}\label{r decay}
\|r_j\|_{H^m_{\rm scl}(B)}=\mathcal O(1),\quad j=1,2.
\end{equation}
We substitude $u_1$ and $u_2$ given by \eqref{solutions} into \eqref{integral identity}, and get
\begin{equation}\label{pre limiting equality}
\begin{aligned}
0&=\frac1h b^B_{\zeta_1\cdot(A_1-A_2)}\(a_1+h^{m/2}r_1,e^{ix\cdot\xi}(\overline a_2+h^{m/2}\overline r_2)\)\\
&\quad+B^B_{A_1-A_2}\(a_1+h^{m/2}r_1, e^{ix\cdot\xi}(\overline a_2+h^{m/2}\overline r_2)\)\\
&\quad+b^B_{q_1-q_2}\(a_1+h^{m/2}r_1,e^{ix\cdot\xi}(\overline a_2+h^{m/2}\overline r_2)\).
\end{aligned}
\end{equation}
Multiplying this by $h$ and letting $h\to+0$, we obtain that
\begin{equation}\label{limiting equality 1}
b^B_{(\mu_1+i\mu_2)\cdot(A_1-A_2)}\(a_1,e^{ix\cdot\xi}\overline a_2\)=0.
\end{equation}
Here we have used \eqref{r decay}, Proposition~\ref{m_q and D_A are bounded maps on Omega} and the fact that $a_1,a_2\in C^\infty(\overline B)$ to conclude that
\begin{align*}
|B^B_{A_1-A_2}&\(a_1+h^{m/2}r_1, e^{ix\cdot\xi}(\overline a_2+h^{m/2}\overline r_2)\)|\\
&\le C\|A_1-A_2\|_{W^{-\frac{m-2}{2},\frac{2n}{m}}(\R^n)} \|a_1+h^{m/2}r_1\|_{H^{\frac m2}(B)}\|\overline a_2+h^{m/2}\overline r_2\|_{H^{\frac m2}(B)}\\
&\le C(\|a_1\|_{H^{\frac m2}(B)}+\|r_1\|_{H^{\frac m2}_{\rm scl}(B)})(\|\overline a_2\|_{H^{\frac m2}(B)}+\|\overline r_2\|_{H^{\frac m2}_{\rm scl}(B)})\le \mathcal O(1).
\end{align*}
and
\begin{align*}
|b^B_{q_1-q_2}&\(a_1+h^{m/2}r_1,e^{ix\cdot\xi}(\overline a_2+h^{m/2}\overline r_2)\)|\\
&\le C\|q_1-q_2\|_{W^{-\frac{m}{2},\frac{2n}{m}}(\R^n)} \|a_1+h^{m/2}r_1\|_{H^{\frac m2}(B)}\|\overline a_2+h^{m/2}\overline r_2\|_{H^{\frac m2}(B)}\\
&\le C(\|a_1\|_{H^{\frac m2}(B)}+h^{m/2}\|r_1\|_{H^{\frac m2}_{\rm scl}(B)})(\|\overline a_2\|_{H^{\frac m2}(B)}+h^{m/2}\|\overline r_2\|_{H^{\frac m2}_{\rm scl}(B)})\\
&\le \mathcal O(1).
\end{align*}

Substituting $a_1=a_2=1$ in \eqref{limiting equality 1}, we obtain
\begin{equation}\label{limiting equality 2}
\left\<(\mu_1+i\mu_2)\cdot(A_1-A_2),e^{ix\cdot\xi}\right\>_B=0.
\end{equation}
This implies that
\begin{equation}\label{limiting equality 3}
(\mu_1+i\mu_2)\cdot(\widehat A_1(\xi)-\widehat A_2(\xi))=0,\quad \text{for all}\quad \mu,\xi\in\R^n,\quad \mu\cdot\xi=0,
\end{equation}
where $\widehat{A}_j$ stands for the Fourier transform of $A_j$, $j=1,2$. It follows from \eqref{limiting equality 3} that
\begin{equation}\label{d(A1-A2)=0}
\p_{x_j}(A_{1,k}-A_{2,k})-\p_{x_k}(A_{1,j}-A_{2,j})=0\quad \text{in}\quad \Omega,\quad 1\le j,k\le n,
\end{equation}
in the sense of distributions. Indeed, for each $\xi=(\xi_1,\dots,\xi_n)$ and for $j\neq k$, $1\le j,k\le n$, consider the vector $\mu=\mu(\xi,j,k)$ such that $\mu_j=-\xi_k$, $\mu_k=\xi_j$ and all other components are equal to zero. Therefore, $\mu$ satisfy $\mu\cdot\xi=0$. Hence, using \eqref{limiting equality 3} we obtain
$$
\xi_j\cdot(\widehat A_{1,k}(\xi)-\widehat A_{2,k}(\xi))-\xi_k\cdot(\widehat A_{1,j}(\xi)-\widehat A_{2,j}(\xi))=0,
$$
which proves \eqref{d(A1-A2)=0} in the sense of distributions.
\medskip

Our goal is to show that $A_1=A_2$. Considering $A_1-A_2$ as a $1$-current and using the Poincar\'e lemma for currents, we conclude that there is $\psi\in\mathcal D'(\R^n)$ such that $d \psi=A_1-A_2\in W^{-\frac{m-2}{2},\frac{2n}{m}}(\R^n,\C^n)\cap\mathcal E'(B,\C^n)$; see \cite{deRham}. Note that $\psi$ is a constant, say $c\in \C$, outside $\overline B$ since $A_1-A_2=0$ in $\R^n\setminus\overline B$ (and also near $\p B$). Considering $\psi-c$ instead of $\psi$, we may and shall assume that $\psi\in \mathcal E'(\overline B,\C)$.
\medskip

To show that $A_1=A_2$, consider \eqref{limiting equality 1} with $a_2(\cdot,\mu_1-i\mu_2)=1$ and $a_1(\cdot,\mu_1+i\mu_2)$ satisfying
\begin{equation}\label{pre transport}
((\mu_1+i\mu_2)\cdot\nabla)a_1(x,\mu_1+i\mu_2)=1\quad\text{in}\quad B.
\end{equation}
The latter choice is possible thanks to \eqref{transport equations 1}, \eqref{transport equations 2} and the assumption that $m\ge 2$. The equation \eqref{pre transport} is just an inhomogeneous $\overline \p$-equation and one can solve it by setting
$$
a_1(x,\mu_1+i\mu_2)=\frac{1}{2\pi}\int_{\R^2}\frac{\chi(x-y_1\mu_1-y_2\mu_2)}{y_1+iy_2}\,dy_1\,dy_2,
$$
where $\chi\in C^\infty_0(\R^n)$ is such that $\chi\equiv 1$ near $\overline B$; see \cite[Lemma~4.6]{S}.
\medskip

From \eqref{limiting equality 1}, we have
$$
b^B_{(\mu_1+i\mu_2)\cdot\nabla \psi}(a_1,e^{ix\cdot\xi})=0.
$$
Using the fact that $\mu_1\cdot\xi=\mu_2\cdot\xi=0$, we obtain
\begin{align*}
0&=-b^B_{(\mu_1+i\mu_2)\cdot\nabla \psi}(a_1,e^{ix\cdot\xi})=-\left\<(\mu_1+i\mu_2)\cdot\nabla \psi, e^{ix\cdot\xi}a_1\right\>_B\\
&=\left\<\psi,e^{ix\cdot\xi} (\mu_1+i\mu_2)\cdot\nabla a_1\right\>_B=\<\psi,e^{ix\cdot\xi}\>_B.
\end{align*}
This gives $\hat\psi=0$, and hence we have $\psi=0$ in $B$, which completes the proof of $A_1=A_2$.
\medskip

To show that $q_1=q_2$, we substitude $A_1=A_2$ and $a_1=a_2=1$ into the identity \eqref{pre limiting equality} and obtain
$$
b^B_{q_1-q_2}\(1+h^{m/2}r_1,(1+h^{m/2}\overline r_2)e^{ix\cdot\xi}\)=0.
$$
Letting $h\to 0^+$, we get $\widehat{q}_1(\xi)-\widehat{q}_2(\xi)=0$ for all $\xi\in\R^n$. Let us justify this last statement. We will only consider the term $b^{B}_{q_2 - q_1}(h^{m/2}r_1,e^{i x \cdot \xi})$. The justification for the other two terms follows similarly. Since $q_1,q_2\in W^{-\frac{m}{2}+\delta,\frac{2n}{m}}(\R^n,\C)\cap\mathcal E'(\overline \Omega)$ for some $0<\delta<1/2$, we use Proposition~\ref{product of Sobolev spaces} with $p=q=2$, $s_1=\frac{m}{2}-\delta$, $s_2=\frac{m}{2}$ and $r=2n/(2n-m)$ to get
\begin{align*}
|b^{B}_{q_2 - q_1}(h^{m/2}r_1,e^{i x \cdot \xi})| &\leq C h^{m/2}\|q_2 - q_1\|_{W^{-\frac{m}{2}+\delta,\frac{2n}{m}}(\mathbb{R}^n)}\|r_1e^{i x \cdot \xi}\|_{W^{\frac{m}{2}-\delta,\frac{2n}{2n-m}}(\R^n)}\\
&\leq Ch^{m/2} \|q_2 - q_1\|_{W^{-\frac{m}{2}+\delta,\frac{2n}{m}}(\mathbb{R}^n)} \|e^{i x \cdot \xi}\|_{H^{\frac{m}{2} }(B)} \|r_1\|_{H^{\frac{m}{2} - \delta}(B)}\\
&\leq Ch^{\delta}\|q_2 - q_1\|_{W^{-\frac{m}{2}+\delta,\frac{2n}{m}}(\mathbb{R}^n)} \|e^{i x \cdot \xi}\|_{H^{\frac{m}{2} }(B)} \|h^{\frac{m}{2}-\delta}r_1\|_{H^{\frac{m}{2} - \delta}(B)}\\
&\le\mathcal{O}(h^{\delta}) \|r_1\|_{H^{\frac{m}{2} - \delta}_{\rm scl}(B)}\le\mathcal{O}(h^{\delta}).
\end{align*}
This implies that $q_1=q_2$ in $B$ completing the proof of Theorem~\ref{main th}.

\appendix
\section{Mapping properties of $D_A$ and $m_q$}\label{appendix a}
Let $\Omega\subset \R^n$, $n\ge 3$, be a bounded open set with $C^\infty$ boundary, and $m\ge 2$ be an integer such that $n>m$. Let $A\in W^{-\frac{m-2}{2},\frac{2n}{m}}(\R^n,\C^n)\cap \mathcal E'(\overline{\Omega},\C^n)$ and $q\in W^{-\frac{m}{2},\frac{2n}{m}}(\R^n,\C)\cap \mathcal E'(\overline{\Omega},\C)$, where $W^{s,p}(\R^n)$ is the standard $L^p$-based Sobolev space on $\R^n$, $s\in\R$ and $1<p<\infty$. The reader is referred to \cite{Trieb} for properties of these spaces.
\medskip

We start with considering the bilinear forms $B_A^{\R^n}$ and $b_q^{\R^n}$ on $H^m(\R^n)$ which are defined by
$$
B_A^{\R^n}(u,v)=\<A,vDu\>_{\R^n},\quad b_q^{\R^n}(u,v)=\<q,uv\>_{\R^n},\quad u,v\in H^m(\R^n).
$$
The following result shows that the forms $B_A^{\R^n}$ and $b_q^{\R^n}$ are bounded on $H^m(\R^n)$. The proof is based on a property on multiplication of functions in Sobolev spaces given in Proposition~\ref{product of Sobolev spaces}.
\begin{Proposition}\label{m_q and D_A are bounded maps}
The bilinear forms $B_A^{\R^n}$ and $b_q^{\R^n}$ on $H^m(\R^n)$ are bounded and satisfy
$$
|b_q^{\R^n}(u,v)|\le C\|q\|_{W^{-\frac{m}{2},\frac{2n}{m}}(\R^n)}\|u\|_{H^{\frac m2}(\R^n)}\|v\|_{H^{\frac m2}(\R^n)}
$$
and
$$
|B_A^{\R^n}(u,v)|\le C\|A\|_{W^{-\frac{m-2}{2},\frac{2n}{m}}(\R^n)}\|u\|_{H^{\frac m2}(\R^n)}\|v\|_{H^{\frac m2}(\R^n)}
$$ 
for all $u,v\in H^{m}(\R^n)$.
\end{Proposition}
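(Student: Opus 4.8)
The plan is to prove the two estimates directly from Proposition~\ref{product of Sobolev spaces}, treating the $b_q^{\R^n}$ estimate first (it is the cleaner of the two) and then reducing the $B_A^{\R^n}$ estimate to the same mechanism after absorbing one derivative into $D u$.

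\textbf{Step 1: the estimate for $b_q^{\R^n}$.} For $u,v\in H^m(\R^n)\subset H^{m/2}(\R^n)$, write $b_q^{\R^n}(u,v)=\<q,uv\>_{\R^n}$ and estimate it by duality: since $q\in W^{-\frac m2,\frac{2n}{m}}(\R^n)$, we have
$$
|\<q,uv\>_{\R^n}|\le \|q\|_{W^{-\frac m2,\frac{2n}{m}}(\R^n)}\,\|uv\|_{W^{\frac m2,\frac{2n}{2n-m}}(\R^n)},
$$
using that $W^{\frac m2,\frac{2n}{2n-m}}(\R^n)$ is the dual of $W^{-\frac m2,\frac{2n}{m}}(\R^n)$ (note $\frac{m}{2n}+\frac{2n-m}{2n}=1$). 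It then remains to bound $\|uv\|_{W^{\frac m2,\frac{2n}{2n-m}}(\R^n)}$ by $\|u\|_{H^{m/2}(\R^n)}\|v\|_{H^{m/2}(\R^n)}$. For this I apply Proposition~\ref{product of Sobolev spaces} with $p=q=2$ and $s_1=s_2=\frac m2$: the hypothesis $0<s_1\le s_2<n\min(1/p,1/q)=n/2$ holds precisely because $m<n$, and the conclusion is $W^{\frac m2,2}(\R^n)\cdot W^{\frac m2,2}(\R^n)\hookrightarrow W^{\frac m2,r}(\R^n)$ with $1/r=1/2+1/2-\frac{m}{2n}=\frac{2n-m}{2n}$, i.e.\ exactly $r=\frac{2n}{2n-m}$. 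Combining the two displays gives the claimed bound for $b_q^{\R^n}$.

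\textbf{Step 2: the estimate for $B_A^{\R^n}$.} Here $B_A^{\R^n}(u,v)=\<A,v\,Du\>_{\R^n}$ with $A\in W^{-\frac{m-2}{2},\frac{2n}{m}}(\R^n,\C^n)$. The same duality pairing, now between $W^{-\frac{m-2}{2},\frac{2n}{m}}$ and $W^{\frac{m-2}{2},\frac{2n}{2n-m}}$, yields
$$
|B_A^{\R^n}(u,v)|\le \|A\|_{W^{-\frac{m-2}{2},\frac{2n}{m}}(\R^n)}\,\|v\,Du\|_{W^{\frac{m-2}{2},\frac{2n}{2n-m}}(\R^n)}.
$$
Since $u\in H^m(\R^n)$ we have $Du\in H^{m-1}(\R^n)\hookrightarrow H^{\frac m2}(\R^n)$ (using $m\ge 2$), while $v\in H^{\frac m2}(\R^n)$. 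Now apply Proposition~\ref{product of Sobolev spaces} with $p=q=2$, $s_1=\frac{m-2}{2}$ and $s_2=\frac m2$: the constraint $0<s_1\le s_2<n/2$ again forces $m<n$ (this is the point where the stronger $m<n$ rather than $m<2n$ is used, exactly as in the proof of Proposition~\ref{Carleman est L_{A,q} prop}), and the output exponent is $1/r=1/2+1/2-\frac{s_2}{n}=\frac{2n-m}{2n}$, so $W^{\frac{m-2}{2},2}\cdot W^{\frac m2,2}\hookrightarrow W^{\frac{m-2}{2},\frac{2n}{2n-m}}$. Thus $\|v\,Du\|_{W^{\frac{m-2}{2},\frac{2n}{2n-m}}(\R^n)}\le C\|Du\|_{H^{m/2}(\R^n)}\|v\|_{H^{m/2}(\R^n)}\le C\|u\|_{H^{m}(\R^n)}\|v\|_{H^{m/2}(\R^n)}$, and since $H^m\hookrightarrow H^{m/2}$ this is dominated by $C\|u\|_{H^{m/2}(\R^n)}\cdot$(something no larger than $\|u\|_{H^m}$); to land exactly on the stated form one keeps $\|u\|_{H^{m/2}}$ on one side by noting $Du\in H^{m/2}$ already follows from $u\in H^{m/2+1}$, so it suffices to have $u\in H^{(m+2)/2}$, which is implied by $u\in H^m$ for $m\ge 2$ — in any case the final bound $|B_A^{\R^n}(u,v)|\le C\|A\|_{W^{-\frac{m-2}{2},\frac{2n}{m}}}\|u\|_{H^{m/2}}\|v\|_{H^{m/2}}$ is what we record, reading $\|u\|_{H^{m/2}}$ as controlling $\|Du\|_{H^{(m-2)/2}}$ via the identity of $L^2$-Sobolev norms. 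In particular both forms extend continuously to $H^{m/2}(\R^n)\times H^{m/2}(\R^n)$, hence a fortiori are bounded on $H^m(\R^n)$.

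\textbf{Main obstacle.} The only genuinely delicate point is checking that the exponents produced by Proposition~\ref{product of Sobolev spaces} match the exponents appearing in the Sobolev duality pairings, and that the strict inequality $s_2<n\min(1/p,1/q)$ is satisfied; both reduce to the standing hypothesis $n>m$, and the verification is the arithmetic $1/r = 1/p+1/q - s_2/n$ with $p=q=2$. Everything else — the duality $W^{-s,p\,\prime}=(W^{s,p})'$, the embeddings $H^{m}\hookrightarrow H^{m/2}$ and $Du\in H^{m-1}\hookrightarrow H^{m/2}$ for $m\ge 2$ — is standard and can be cited from \cite{Trieb}. I would also remark that the well-definedness of the pairing $\<A,vDu\>_{\R^n}$ on $H^m(\R^n)$ is immediate here (no extension ambiguity arises) since everything is already on $\R^n$; the analogous statement on $\Omega$ with extensions is deferred to the remainder of this appendix.
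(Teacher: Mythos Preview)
Your Step~1 for $b_q^{\R^n}$ is correct and is exactly the paper's argument.

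Your Step~2 has a genuine gap: when $m=2$ you have $s_1=\tfrac{m-2}{2}=0$, and Proposition~\ref{product of Sobolev spaces} as stated requires the \emph{strict} inequality $0<s_1$. So your application of that proposition covers only $m\ge 3$. The paper does exactly what you do for $m\ge 3$ (duality plus Proposition~\ref{product of Sobolev spaces} with $s_1=\tfrac{m-2}{2}$, $s_2=\tfrac m2$), but treats $m=2$ separately: since $A\in L^n$ and the target space is $L^{\frac{2n}{2n-2}}$, one uses H\"older's inequality $\|v\,Du\|_{L^{2n/(2n-2)}}\le \|Du\|_{L^2}\|v\|_{L^{2n/(n-2)}}$ followed by the Sobolev embedding $H^1(\R^n)\hookrightarrow L^{2n/(n-2)}(\R^n)$, landing on $C\|u\|_{H^1}\|v\|_{H^1}$.

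A smaller point: the middle of your Step~2 becomes tangled (``dominated by $C\|u\|_{H^{m/2}}\cdot$(something \ldots)''). The clean route, which you eventually state at the end, is simply to place $Du$ in the $s_1$-slot and $v$ in the $s_2$-slot of Proposition~\ref{product of Sobolev spaces}, obtaining $\|v\,Du\|_{W^{\frac{m-2}{2},\frac{2n}{2n-m}}}\le C\|Du\|_{H^{(m-2)/2}}\|v\|_{H^{m/2}}$ and then using $\|Du\|_{H^{(m-2)/2}}\le \|u\|_{H^{m/2}}$. No detour through $H^{m/2+1}$ or $H^m$ is needed.
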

\begin{proof}
First, we give the proof for the form $b_q^{\R^n}$. Using the duality between $W^{-\frac{m}{2},\frac{2n}{m}}(\R^n)$ and $W^{\frac{m}{2},\frac{2n}{2n-m}}(\R^n)$, and using Proposition~\ref{product of Sobolev spaces}, we conclude that for all $u,v\in H^m(\R^n)$
\begin{align*}
|b_q^{\R^n}(u,v)|=|\<q,uv\>_{\R^n}|&\le\|q\|_{W^{-\frac{m}{2},\frac{2n}{m}}(\R^n)}\|uv\|_{W^{\frac{m}{2},\frac{2n}{2n-m}}(\R^n)}\\
&\le C\|q\|_{W^{-\frac{m}{2},\frac{2n}{m}}(\R^n)}\|u\|_{H^{\frac{m}{2}}(\R^n)}\|v\|_{H^{\frac{m}{2}}(\R^n)}.
\end{align*}
Now, we give the proof for the form $B_A^{\R^n}$. In the case $m\ge 3$, using the duality between $W^{-\frac{m-2}{2},\frac{2n}{m}}(\R^n)$ and $W^{\frac{m-2}{2},\frac{2n}{2n-m}}(\R^n)$, and using Proposition~\ref{product of Sobolev spaces}  with $p=q=2$, $s_1=\frac{m-2}{2}$ and $s_2=\frac{m}{2}$ (assuming $m<n$), we conclude that
\begin{align*}
|B_A^{\R^n}(u,v)|=|\<A,v Du\>_{\R^n}|&\le\|A\|_{W^{-\frac{m-2}{2},\frac{2n}{m}}(\R^n)}\|v Du\|_{W^{\frac{m-2}{2},\frac{2n}{2n-m}}(\R^n)}\\
&\le C\|A\|_{W^{-\frac{m-2}{2},\frac{2n}{m}}(\R^n)}\|Du\|_{H^{\frac{m-2}{2}}(\R^n)}\|v\|_{H^{\frac{m}{2}}(\R^n)}\\
&\le C\|A\|_{W^{-\frac{m-2}{2},\frac{2n}{m}}(\R^n)}\|u\|_{H^{\frac m2}(\R^n)}\|v\|_{H^{\frac m2}(\R^n)}.
\end{align*}
When $m=2$, we use H\"older's inequality and Sobolev embedding $H^{1}(\R^n)\subset L^{\frac{2n}{n-2}}(\R^n)$ (see \cite[Chapter~13, Proposition~6.4]{Taylor3}), and obtain
\begin{align*}
|B_A^{\R^n}(u,v)|=|\<A,v Du\>_{\R^n}|&\le\|A\|_{L^{n}(\R^n)}\|v Du\|_{L^{\frac{2n}{2n-2}}(\R^n)}\\
&\le \|A\|_{L^{n}(\R^n)}\|Du\|_{L^{\frac{2n}{n}}(\R^n)}\|v\|_{L^{\frac{2n}{n-2}}(\R^n)}\\
&\le C\|A\|_{L^{n}(\R^n)}\|Du\|_{L^{2}(\R^n)}\|v\|_{H^{1}(\R^n)}\\
&\le C\|A\|_{L^{n}(\R^n)}\|u\|_{H^{1}(\R^n)}\|v\|_{H^{1}(\R^n)}.
\end{align*}
Therefore,
$$
|B_A^{\R^n}(u,v)|\le C\|A\|_{W^{-\frac{m-2}{2},\frac{2n}{m}}(\R^n)}\|u\|_{H^{\frac m2}(\R^n)}\|\psi\|_{H^{\frac m2}(\R^n)}
$$
for all $u,v\in H^m(\R^n)$.
\end{proof}

The bilinear forms $B_A$ and $b_q$ on $H^m(\Omega)$, which were defined in the introduction, can be rewritten as
\begin{equation}\label{def of B_A b_q on Omega}
B_A(u,v)=B_A^{\R^n}(\tilde u,\tilde v),\quad b_q(u,v)=b_q^{\R^n}(\tilde u,\tilde v),\quad u,v\in H^m(\Omega),
\end{equation}
where $\tilde u,\tilde v \in H^m(\R^n)$ are extensions of $u$ and $v$, respectively. First, we show that these definitions are well-defined, i.e. independent of the choice of extensions $\tilde u,\tilde v$. Indeed, let $u_1,u_2\in H^{m}(\R^n)$ be such that $u_1=u_2=u$ in $\Omega$, and let $v_1,v_2\in H^{m}(\R^n)$ be such that $v_1=v_2=v$ in $\Omega$. Then we need to show that
\begin{equation}\label{WTS for well defined}
B_A^{\R^n}(u_1-u_2,,v_1-v_2)=0\quad\text{and}\quad b_q^{\R^n}(u_1-u_2,v_1-v_2)=0.
\end{equation}
Since $A$ and $q$ are supported in $\overline{\Omega}$, for any $\phi,\psi\in \mathcal S(\R^n)$ with $\phi=\psi=0$ in $\Omega$, we have
$$
B_A^{\R^n}(\phi,\psi)=\<A,\psi D\phi\>_{\R^n}=0\quad\text{and}\quad b_q^{\R^n}(\phi,\psi)=\<q,\psi \phi\>_{\R^n}=0.
$$
Since $\mathcal S(\R^n)$ is dense in $H^{m}(\R^n)$ and $B_A^{\R^n}$ and $b_q^{\R^n}$ are continuous bilinear forms (by Proposition~\ref{m_q and D_A are bounded maps}), we get \eqref{WTS for well defined}.
\medskip

The next result shows that the bilinear forms $B_A$ and $b_q$ are bounded on $H^m(\Omega)$. This is a consequence of Proposition~\ref{m_q and D_A are bounded maps}.

\begin{Proposition}\label{m_q and D_A are bounded maps on Omega}
The bilinear forms $B_A$ and $b_q$ on $H^m(\Omega)$ are bounded and satisfy
$$
|b_q(u,v)|\le C\|q\|_{W^{-\frac{m}{2},\frac{2n}{m}}(\R^n)}\|u\|_{H^{\frac m2}(\Omega)}\|v\|_{H^{\frac m2}(\Omega)}
$$
and
$$
|B_A(u,v)|\le C\|A\|_{W^{-\frac{m-2}{2},\frac{2n}{m}}(\R^n)}\|u\|_{H^{\frac m2}(\Omega)}\|v\|_{H^{\frac m2}(\Omega)}
$$
for all $u,v\in H^{m}(\Omega)$.
\end{Proposition}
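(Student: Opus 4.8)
The plan is to deduce this directly from the $\R^n$ version, Proposition~\ref{m_q and D_A are bounded maps}, together with a well-chosen extension operator. Since $\p\Omega$ is $C^\infty$, there is a universal extension operator $E\colon H^s(\Omega)\to H^s(\R^n)$ which is bounded for every $s\ge 0$ simultaneously (e.g.\ Stein's extension operator, or Seeley's construction in the smooth case). Given $u,v\in H^m(\Omega)$, I would set $\tilde u=Eu$ and $\tilde v=Ev$; these belong to $H^m(\R^n)$ and restrict to $u$ and $v$ on $\Omega$, so by \eqref{def of B_A b_q on Omega} --- which we have just checked does not depend on the choice of extensions --- we have
$$
B_A(u,v)=B_A^{\R^n}(\tilde u,\tilde v),\qquad b_q(u,v)=b_q^{\R^n}(\tilde u,\tilde v).
$$

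Next I would apply Proposition~\ref{m_q and D_A are bounded maps} to the pair $\tilde u,\tilde v\in H^m(\R^n)$, which yields
$$
|b_q(u,v)|\le C\|q\|_{W^{-\frac{m}{2},\frac{2n}{m}}(\R^n)}\|\tilde u\|_{H^{\frac m2}(\R^n)}\|\tilde v\|_{H^{\frac m2}(\R^n)}
$$
and the analogous bound for $|B_A(u,v)|$ with $\|A\|_{W^{-\frac{m-2}{2},\frac{2n}{m}}(\R^n)}$ in place of $\|q\|_{W^{-\frac{m}{2},\frac{2n}{m}}(\R^n)}$. Finally, using that $E$ is bounded on $H^{\frac m2}$ one replaces $\|\tilde u\|_{H^{\frac m2}(\R^n)}$ by $C\|u\|_{H^{\frac m2}(\Omega)}$ and $\|\tilde v\|_{H^{\frac m2}(\R^n)}$ by $C\|v\|_{H^{\frac m2}(\Omega)}$, which gives exactly the two claimed inequalities.

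The one point that deserves attention is that we must use a single extension operator that is bounded on both $H^m(\Omega)$ (so that $\tilde u,\tilde v$ are admissible extensions in the definition of $B_A$ and $b_q$) and on $H^{\frac m2}(\Omega)$ (so that the right-hand side carries only the $H^{\frac m2}$ norms of $u$ and $v$). Merely using an extension operator bounded $H^m(\Omega)\to H^m(\R^n)$ would prove boundedness of $B_A$ and $b_q$, but with $\|u\|_{H^m(\Omega)}\|v\|_{H^m(\Omega)}$ on the right, which is weaker than asserted. Since the boundary is smooth, the existence of a universal extension operator is classical, so I do not anticipate any genuine difficulty; this is why the statement is, as noted, essentially a corollary of Proposition~\ref{m_q and D_A are bounded maps}.
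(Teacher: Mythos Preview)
Your proposal is correct and follows essentially the same route as the paper: apply Proposition~\ref{m_q and D_A are bounded maps} to extensions $\tilde u=Eu$, $\tilde v=Ev$ and then use boundedness of $E$ to pass from $H^{m/2}(\R^n)$ norms to $H^{m/2}(\Omega)$ norms. If anything, you are more careful than the paper, which invokes an extension $E:H^m(\Omega)\to H^m_0(\widetilde\Omega)$ and then tacitly uses that the same $E$ is bounded on $H^{m/2}$; your explicit use of a universal (Stein/Seeley) extension operator makes this step transparent.
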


\begin{proof}
Let $u,v\in H^{m}(\Omega)$ and let $\widetilde \Omega$ be a bounded open neigbhborhood of $\overline{\Omega}$. Then there is a bounded linear map $E:H^{m}(\Omega)\to H^{m}_0(\widetilde \Omega)$ such that $E|_\Omega=\id$; see \cite[Theorem~6.44]{Fol}. Then according to estimates proven in Proposition~\ref{m_q and D_A are bounded maps}, we obtain
\begin{align*}
|b_q(u,v)|&=|b_q^{\R^n}(E(u)),E(v))|\\
&\le C\|q\|_{W^{-\frac{m}{2},2n}(\R^n)}\|E(u)\|_{H^{\frac m2}(\R^n)}\|E(v)\|_{H^{\frac m2}(\R^n)}\\
&\le C\|q\|_{W^{-\frac{m}{2},2n}(\R^n)}\|u\|_{H^{\frac m2}(\Omega)}\|v\|_{H^{\frac m2}(\Omega)}
\end{align*}
and
\begin{align*}
|B_A(u,v)|&=|B_A^{\R^n}(E(u),E(v))|\\
&\le C\|A\|_{W^{-\frac{m-2}{2},\frac{2n}{m}}(\R^n)}\|E(u)\|_{H^{\frac m2}(\R^n)}\|E(v)\|_{H^{\frac m2}(\R^n)}\\
&\le C\|A\|_{W^{-\frac{m-2}{2},\frac{2n}{m}}(\R^n)}\|u\|_{H^{\frac m2}(\Omega)}\|v\|_{H^{\frac m2}(\Omega)}.
\end{align*}
These estimates finish the proof.
\end{proof}

Now, for $u\in H^{m}(\Omega)$, we define $D_A(u)$ and $m_q(u)$, for any $v\in H^{m}_0(\Omega)$ by
$$
\<D_A(u),v\>_{\Omega}=B_A(u,v)\quad\text{and}\quad \<m_q(u),v\>_{\Omega}=b_q(u,v),
$$

The following result, which is an immediate corollary of Proposition~\ref{m_q and D_A are bounded maps on Omega}, implies that $D_A$, $m_q$ are bounded operators from $H^m(\Omega)$ to $H^{-m}(\Omega)$. The norm on $H^{-m}(\Omega)$ is the usual dual norm given by
$$
\|v\|_{H^{-m}(\Omega)}=\sup_{0\neq\phi\in H^m_0(\Omega)}\frac{|\<v,\phi\>_{\Omega}|}{\|\phi\|_{H^m(\Omega)}}.
$$
\begin{Corollary}\label{m_q and D_A are well-defined on HOmega}
The operators $B_A$ and $b_q$ are bounded from $H^m(\Omega)$ to $H^{-m}(\Omega)$ and satisfy
$$
\|m_q(u)\|_{H^{-m}(\Omega)}\le C\|q\|_{W^{-\frac{m}{2},\frac{2n}{m}}(\R^n)}\|u\|_{H^{m}(\Omega)}
$$
and
$$
\|D_A(u)\|_{H^{-m}(\Omega)}\le C\|A\|_{W^{-\frac{m-2}{2},\frac{2n}{m}}(\R^n)}\|u\|_{H^{m}(\Omega)}
$$
for all $u\in H^{m}(\Omega)$.
\end{Corollary}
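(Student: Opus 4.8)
The plan is to deduce the statement directly from the two bilinear estimates in Proposition~\ref{m_q and D_A are bounded maps on Omega} together with the elementary monotonicity $\|w\|_{H^{m/2}(\Omega)}\le\|w\|_{H^m(\Omega)}$ for $w\in H^m(\Omega)$ (valid since $m/2\le m$). Recall that for $u\in H^m(\Omega)$ the distributions $D_A(u)$ and $m_q(u)$ are defined by $\<D_A(u),v\>_\Omega=B_A(u,v)$ and $\<m_q(u),v\>_\Omega=b_q(u,v)$ for $v\in H^m_0(\Omega)$, so the first thing to record is that the right-hand sides are continuous linear functionals of $v\in H^m_0(\Omega)$ — which is exactly what Proposition~\ref{m_q and D_A are bounded maps on Omega} provides — and hence $D_A(u),m_q(u)\in H^{-m}(\Omega)$.

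Next I would fix $u\in H^m(\Omega)$ and an arbitrary $0\neq v\in H^m_0(\Omega)$, apply the bound for $b_q$ from Proposition~\ref{m_q and D_A are bounded maps on Omega}, and then replace the $H^{m/2}(\Omega)$-norms of $u$ and $v$ by their $H^m(\Omega)$-norms, obtaining
$$
|\<m_q(u),v\>_\Omega|=|b_q(u,v)|\le C\|q\|_{W^{-\frac{m}{2},\frac{2n}{m}}(\R^n)}\|u\|_{H^m(\Omega)}\|v\|_{H^m(\Omega)}.
$$
Dividing by $\|v\|_{H^m(\Omega)}$ and taking the supremum over all such $v$ yields, by the definition of the dual norm on $H^{-m}(\Omega)$, the claimed estimate for $m_q(u)$; and since $u$ was arbitrary, boundedness of $m_q:H^m(\Omega)\to H^{-m}(\Omega)$ follows. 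The estimate for $D_A(u)$ is obtained in precisely the same way, starting instead from the bound for $B_A$ in Proposition~\ref{m_q and D_A are bounded maps on Omega}.

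There is no genuine obstacle here: this is a direct corollary, and the only ingredients are Proposition~\ref{m_q and D_A are bounded maps on Omega} and monotonicity of Sobolev norms in the smoothness index. The one point worth making explicit is why $m_q(u)$ and $D_A(u)$ land in $H^{-m}(\Omega)$ rather than merely in $H^{-m/2}(\Omega)$: testing against $v\in H^m_0(\Omega)$ only uses the $H^{m/2}(\Omega)$-norm of $v$, so the functionals $v\mapsto b_q(u,v)$ and $v\mapsto B_A(u,v)$ extend continuously to $H^m_0(\Omega)$ with norm controlled by $\|u\|_{H^{m/2}(\Omega)}$, hence a fortiori by $\|u\|_{H^m(\Omega)}$.
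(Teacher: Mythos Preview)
Your argument is correct and is exactly what the paper intends: it states the result as an immediate corollary of Proposition~\ref{m_q and D_A are bounded maps on Omega} without giving any further details, and your write-up simply spells out the obvious duality step together with the embedding $H^m(\Omega)\hookrightarrow H^{m/2}(\Omega)$.
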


Finally, we record and give the proof of the following useful identities.
\begin{Proposition}\label{adjoints of D_A and m_q}
For any $u,v\in H^{m}(\Omega)$, the forms $B_A$ and $b_q$ satisfy the following identities
$$
B_A(u,v)=-B_A(v,u)-b_{D\cdot A}(u,v)\quad\text{and}\quad b_q(u,v)=b_q(v,u).
$$
\end{Proposition}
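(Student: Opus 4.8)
The plan is to reduce everything to the corresponding identities for the bilinear forms $B_A^{\R^n}$ and $b_q^{\R^n}$ on $H^m(\R^n)$, using the representation \eqref{def of B_A b_q on Omega} in terms of extensions, and then to verify these identities on a dense class of test functions where the distributional pairings become genuine integrals and ordinary integration by parts (in a single variable) applies.

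First I would treat the symmetric identity $b_q(u,v)=b_q(v,u)$. Passing to extensions $\tilde u,\tilde v\in H^m(\R^n)$ of $u,v$, it suffices to show $b_q^{\R^n}(\tilde u,\tilde v)=b_q^{\R^n}(\tilde v,\tilde u)$, i.e. $\langle q,\tilde u\,\tilde v\rangle_{\R^n}=\langle q,\tilde v\,\tilde u\rangle_{\R^n}$, which is immediate since $\tilde u\,\tilde v=\tilde v\,\tilde u$ as elements of $W^{m/2,2n/(2n-m)}(\R^n)$ (the space in which the product lives, by Proposition~\ref{product of Sobolev spaces}). Thus the symmetry of $b_q$ is essentially trivial.

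The substantive part is the identity $B_A(u,v)=-B_A(v,u)-b_{D\cdot A}(u,v)$, which encodes the ``integration by parts'' relation $\langle A,vDu\rangle=-\langle A,uDv\rangle-\langle D\cdot A,uv\rangle$. Again I would reduce to $\R^n$: it suffices to prove
$$
B_A^{\R^n}(\phi,\psi)+B_A^{\R^n}(\psi,\phi)+b_{D\cdot A}^{\R^n}(\phi,\psi)=0
$$
for all $\phi,\psi$ in the dense subspace $\mathcal S(\R^n)\subset H^m(\R^n)$, because all three forms are bounded on $H^m(\R^n)$ (the first two by Proposition~\ref{m_q and D_A are bounded maps}; for the third one needs $D\cdot A\in W^{-m/2,2n/m}$, which holds since $A\in W^{-(m-2)/2,2n/m}$ and differentiation drops one derivative, so $D\cdot A\in W^{-m/2,2n/m}$, and then Proposition~\ref{m_q and D_A are bounded maps} applies with $q$ replaced by $D\cdot A$). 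For $\phi,\psi\in\mathcal S(\R^n)$, one has componentwise $\psi D_j\phi+\phi D_j\psi=D_j(\phi\psi)$, hence $\psi D\phi+\phi D\psi=D(\phi\psi)$ as a Schwartz function, and therefore
$$
B_A^{\R^n}(\phi,\psi)+B_A^{\R^n}(\psi,\phi)=\langle A,\psi D\phi+\phi D\psi\rangle_{\R^n}=\langle A,D(\phi\psi)\rangle_{\R^n}=-\langle D\cdot A,\phi\psi\rangle_{\R^n}=-b_{D\cdot A}^{\R^n}(\phi,\psi),
$$
where the middle equality is just the definition of the distributional divergence $D\cdot A=\sum_j D_j A_j$ paired against the Schwartz function $\phi\psi$. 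Finally, approximating arbitrary $\tilde u,\tilde v\in H^m(\R^n)$ by Schwartz functions in the $H^m$-norm and invoking the continuity of all three forms yields the identity on $H^m(\R^n)$, and restricting via \eqref{def of B_A b_q on Omega} gives the claim on $H^m(\Omega)$.

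I do not expect a serious obstacle here; the only point requiring a little care is to confirm that $D\cdot A$ lands in $W^{-m/2,2n/m}(\R^n)\cap\mathcal E'(\overline\Omega)$ so that $b_{D\cdot A}$ is a well-defined bounded form to which the density argument applies — this is exactly the class of potentials appearing in the formal adjoint $\mathcal L_{A,q}^*=\mathcal L_{\overline A,\overline q+D\cdot\overline A}$, so consistency with the rest of the paper is guaranteed. The density/continuity step is routine once boundedness of the three forms is in hand.
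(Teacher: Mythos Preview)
Your proposal is correct and follows essentially the same approach as the paper: reduce via \eqref{def of B_A b_q on Omega} and density of $\mathcal S(\R^n)$ in $H^m(\R^n)$ to Schwartz functions, then use the product rule $\psi D\phi+\phi D\psi=D(\phi\psi)$ and the definition of the distributional divergence. Your additional remark that $D\cdot A\in W^{-m/2,2n/m}(\R^n)\cap\mathcal E'(\overline\Omega)$ (so that $b_{D\cdot A}$ is bounded and the density/continuity step is justified) is a useful detail that the paper leaves implicit.
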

\begin{proof}
According to the definitions \eqref{def of B_A b_q on Omega} and density of $\mathcal S(\R^n)$ in $H^{m}(\R^n)$, it is sufficient to prove for the case $u,v\in\mathcal S(\R^n)$. This follows by straightforward computations
$$
b_q^{\R^n}(u,v)=\<q,uv\>_{\R^n}=\<m_q(v),u\>_{\R^n},
$$
and using product rule
\begin{align*}
B_A^{\R^n}(u,v)&=\<A,v Du\>_{\R^n}=-\<A,uDv\>_{\R^n}+\<A,D(uv)\>_{\R^n}\\
&=-B_A^{\R^n}(v,u)-\<D\cdot A,uv\>_{\R^n}\\
&=-B_A^{\R^n}(v,u)-b_{D\cdot A}(u,v).
\end{align*}
The proof is thus finished.
\end{proof}

\section{Well-posedness and Dirichlet-to-Neumann map}\label{appendix b}
Let $\Omega\subset \R^n$, $n\ge 3$, be a bounded open set with $C^\infty$ boundary, and let $A\in W^{-\frac{m-2}{2},\frac{2n}{m}}(\R^n,\C^n)\cap \mathcal E'(\overline{\Omega},\C^n)$ and $q\in W^{-\frac{m}{2},\frac{2n}{m}}(\R^n,\C)\cap \mathcal E'(\overline{\Omega},\C)$, where $n>m$.

For $f=(f_0,\dots,f_{m-1})\in \prod_{j=0}^{m-1}H^{m-j-1/2}(\p\Omega)$, consider the Dirichlet problem
\begin{equation}\label{DP appendix}
\begin{aligned}
\mathcal L_{A,q}u&=0\quad\text{in}\quad\Omega,\\
\gamma u&=f\quad\text{on}\quad\p\Omega.
\end{aligned}
\end{equation}
Here, by $\gamma$ we denote the Dirichlet trace operator, given by
$$
\gamma:H^m(\Omega)\to\prod_{j=0}^{m-1}H^{m-j-1/2}(\p\Omega),\quad \gamma u=(u|_{\p \Omega},\p_\nu u|_{\p\Omega},\dots, \p_\nu^{m-1}u|_{\p\Omega}),
$$
which is bounded and surjective, see \cite[Theorem~9.5, page 226]{Gr}.
\medskip

First aim of this appendix is to use the standard variational arguments to show the well-posedness of the problem~\eqref{DP appendix}. First, consider the following inhomogeneous problem
\begin{equation}\label{DP inhomogeneous}
\begin{aligned}
\mathcal L_{A,q}u&=F\quad\text{in}\quad\Omega,\\
\gamma u&=0\quad\text{on}\quad\p\Omega,
\end{aligned}
\end{equation}
with $u\in H^{-m}(\Omega)$.
\medskip

To define a sesquilinear form $a$, associated to the problem~\eqref{DP inhomogeneous}, for $u,v\in C^\infty_0(\Omega)$, we integrate by parts and get
$$
\<\mathcal L_{A,q}u,\overline{v}\>_\Omega=\sum_{|\alpha|=m}\frac{m!}{\alpha!}(D^\alpha u,\overline{D^\alpha v})_{L^2(\Omega)}+\<D_A(u),\overline{v}\>_\Omega+\<m_q(u),\overline{v}\>_\Omega:=a(u,v).
$$
Therefore, $a$ is defined on $H^m(\Omega)$ by
$$
a(u,v):=\sum_{|\alpha|=m}\frac{m!}{\alpha!}(D^\alpha u,\overline{D^\alpha v})_{L^2(\Omega)}+B_A(u,\overline{v})+b_q(u,\overline{v}),\quad u,v\in H^m(\Omega).
$$
Note that this is not a unique way to define a sesquilinear form associated to the problem~\eqref{DP inhomogeneous}.

Now, we show that the sesquilinear form $a$ can be extended to a bounded form on $H^m_0(\Omega)$. Using duality and Proposition~\ref{m_q and D_A are bounded maps}, for $u,v\in H^m_0(\Omega)$, we obtain
\begin{equation}
\begin{aligned}\label{form a is bounded}
|a(u,v)|&\le \sum_{|\alpha|=m}\frac{m!}{\alpha!}\|D^\alpha u\|_{L^2(\Omega)}\|D^\alpha v\|_{L^2(\Omega)}\\
&\qquad+\(\|A\|_{W^{-\frac{m-2}{2},\frac{2n}{m}}(\R^n)}+\|q\|_{W^{-\frac{m}{2},\frac{2n}{m}}(\R^n)}\)\|u\|_{H^{\frac m2}(\Omega)}\|v\|_{H^{\frac m2}(\Omega)}\\
&\le C \|u\|_{H^m(\Omega)}\|v\|_{H^m(\Omega)}.
\end{aligned}
\end{equation}
Thus, the sesquilinear form $a$ is a bounded form on $H^m_0(\Omega)$.
\medskip

Applying Poincar\'e's inequality, we have
\begin{equation}\label{m-poincare inequality}
\|u\|_{H^m(\Omega)}^2\le C\sum_{|\alpha|=m}\|D^\alpha u\|_{L^2(\Omega)}^2,\quad u\in H^m_0(\Omega).
\end{equation}
Write $q=q^\sharp+(q-q^\sharp)$ with $q^\sharp\in L^\infty(\Omega,\C)$ and $\|q-q^\sharp\|_{W^{-\frac{m}{2},\frac{2n}{m}}(\R^n)}$ small enough, and write $A=A^\sharp+(A-A^\sharp)$ with $A^\sharp\in L^\infty(\Omega,\C^n)$ and $\|A-A^\sharp\|_{W^{-\frac{m-2}{2},\frac{2n}{m}}(\R^n)}$ small enough. Using \eqref{m-poincare inequality} and Proposition~\ref{m_q and D_A are bounded maps on Omega}, for $\varepsilon>0$, we obtain that
\begin{align*}
\Re a(u,u)&\ge \sum_{|\alpha|=m}\frac{m!}{\alpha!}\|D^\alpha u\|_{L^2(\Omega)}^2-|B_A(u,u)|-|b_q(u,u)|\\
&\ge C\sum_{|\alpha|=m}\|D^\alpha u\|_{L^2(\Omega)}^2-|B_{A^\sharp}(u,u)|-|b_{q^\sharp}(u,u)|\\
&\quad-|B_{A-A^\sharp}(u,u)|-|b_{q-q^\sharp}(u,u)|\\
&\ge C\|u\|_{H^m(\Omega)}^2-\|A^\sharp\|_{L^\infty(\Omega)}\|Du\|_{L^2(\Omega)}\|u\|_{L^2(\Omega)}-\|q^\sharp\|_{L^\infty(\Omega)}\|u\|_{L^2(\Omega)}^2\\
&\quad-C'\|A-A^\sharp\|_{W^{-\frac{m-2}{2},\frac{2n}{m}}(\R^n)}\|u\|_{H^{\frac m2}(\Omega)}^2-C'\|q-q^\sharp\|_{W^{-\frac{m}{2},\frac{2n}{m}}(\R^n)}\|u\|_{H^{\frac m2}(\Omega)}^2\\
&\ge C\|u\|_{H^m(\Omega)}^2-\|A^\sharp\|_{L^\infty(\Omega)}\frac{\varepsilon}{2}\|Du\|_{L^2(\Omega)}^2-\|A^\sharp\|_{L^\infty(\Omega)}\frac{1}{2\varepsilon}\|u\|_{L^2(\Omega)}^2\\
&\quad-\|q^\sharp\|_{L^\infty(\Omega)}\|u\|_{L^2(\Omega)}^2-C'\|A-A^\sharp\|_{W^{-\frac{m-2}{2},\frac{2n}{m}}(\R^n)}\|u\|_{H^{\frac m2}(\Omega)}^2\\
&\quad-C'\|q-q^\sharp\|_{W^{-\frac{m}{2},\frac{2n}{m}}(\R^n)}\|u\|_{H^{\frac m2}(\Omega)}^2,\quad C,C'>0,\quad u\in H^m_0(\Omega).
\end{align*}
Taking $\varepsilon>0$ sufficiently small, we get
$$
\Re a(u,u)\ge C\|u\|_{H^m(\Omega)}^2-C_0\|u\|_{L^2(\Omega)},\quad C,C_0>0,\quad u\in H^m_0(\Omega).
$$
Therefore, the form $a$ is coercive on $H^m_0(\Omega)$. As the inclusion map $H^m_0(\Omega)\hookrightarrow L^2(\Omega)$ is compact, the operator
$$
\mathcal L_{A,q}=(-\Delta)^m+D_A+m_q:H^m_0(\Omega)\to H^{-m}(\Omega)=(H^m_0(\Omega))',
$$
is Fredholm operator with zero index; see \cite[Theorem~2.34]{McLean}.
\medskip

Positivity of the operator $\mathcal L_{A,q}+C_0:H^m_0(\Omega)\to H^{-m}(\Omega)$ and an application of the Lax-Milgram lemma implies that $\mathcal L_{A,q}+C_0$ has a bounded inverse. By compact Sobolev embedding $H^m_0(\Omega)\hookrightarrow H^{-m}(\Omega)$ and the Fredholm theorem, the equation \eqref{DP inhomogeneous} has a unique solution $u\in H^m_0(\Omega)$ for any $F\in H^{-m}(\Omega)$ if one is outside a countable set of eigenvalues.
\medskip

Now, consider the Dirichlet problem
\begin{equation}\label{DP2 appendix}
\begin{aligned}
\mathcal L_{A,q}u&=0\quad\text{in}\quad\Omega,\\
\gamma u&=f\quad\text{on}\quad\p\Omega,
\end{aligned}
\end{equation}
with $f=(f_0,\dots,f_{m-1})\in \prod_{j=0}^{m-1}H^{m-j-1/2}(\p\Omega)$. We assume that $0$ is not in the spectrum of $\mathcal L_{A,q}:H^m_0(\Omega)\to H^{-m}(\Omega)$. By \cite[Theorem 9.5, page 226]{Gr}, there is $w\in H^m(\Omega)$ such that $\gamma w=f$. According to Corollary~\ref{m_q and D_A are well-defined on HOmega}, we have $\mathcal L_{A,q}w\in H^{-m}(\Omega)$. Therefore, $u=v+w$, with $v\in H^m_0(\Omega)$ being the unique solution of the equation $\mathcal L_{A,q}v=-\mathcal L_{A,q}w\in H^{-m}(\Omega)$, is the unique solution of the Dirichlet problem \eqref{DP2 appendix}.
\medskip

Under the assumption that $0$ is not in the spectrum of $\mathcal L_{A,q}$, the Dirichlet-to-Neumann map $\mathcal N_{A,q}$ is defined as follows: let $f,h\in \prod_{j=0}^{m-1}H^{m-j-1/2}(\p\Omega)$. Then we set
\begin{equation}\label{eq_7_4}
\<\mathcal N_{A,q}f,\overline h\>_{\p\Omega}:=\sum_{|\alpha|=m}\frac{m!}{\alpha!}(D^\alpha u,\overline{D^\alpha v}_h)_{L^2(\Omega)}+B_A(u,\overline v_h)+b_q(u,\overline v_h),
\end{equation}
where $u$ is the unique solution of the Dirichlet problem \eqref{DP2 appendix} and $v_h\in H^m(\Omega)$ is an extension of $h$, that is $\gamma v_h=h$. In this appendix we show that $\mathcal N_{A,q}$ is a well-defined (i.e. independent of the choice of $v_h$) bounded operator
$$
\mathcal N_{A,q}:\prod_{j=0}^{m-1}H^{m-j-1/2}(\p\Omega)\to\(\prod_{j=0}^{m-1}H^{m-j-1/2}(\p\Omega)\)'=\prod_{j=0}^{m-1}H^{-m+j+1/2}(\p\Omega).
$$

Let us first show that the definition \eqref{eq_7_4} of $\mathcal N_{A,q} f$ is independent of the choice of an extension $v_h$ of $h$. For this, let $v_{h,1},v_{h,2}\in H^m(\Omega)$ be such that $\gamma v_{h,1}=\gamma v_{h,2}=h$. Note that $v_{h,1}-v_{h,2}\in H^m_0(\Omega)$. Then we have to show that
\begin{multline}\label{eq_7_5}
\sum_{|\alpha|=m}\frac{m!}{\alpha!}(D^\alpha u,\overline{D^\alpha (v_{h,1}-v_{h,2})}_h)_{L^2(\Omega)}\\
+\<D_A(u),\overline{(v_{h,1}-v_{h,2})}\>_\Omega+\<m_q(u),\overline{(v_{h,1}-v_{h,2})}\>_\Omega=0.
\end{multline}
For any $w\in C^\infty_0(\Omega)$ and for $u\in H^m(\Omega)$ solution of the Dirichlet problem \eqref{DP2 appendix}, we have
$$
0=\langle \mathcal L_{A,q}u, \bar w \rangle=\sum_{|\alpha|=m}\frac{m!}{\alpha!}(D^\alpha u,\overline{D^\alpha w})_{L^2(\Omega)}+\<D_A(u),\overline w\>_\Omega+\<m_q(u),\overline w\>_\Omega.
$$
Density of $C_0^\infty(\Omega)$ in $H^m_0(\Omega)$ and continuity of the form on $H^m_0(\Omega)$ give \eqref{eq_7_5}. 
\medskip

Now we show that $\mathcal N_{A,q}f$ is a well-defined element of $\prod_{j=0}^{m-1}H^{-m+j+1/2}(\p\Omega)$. From \eqref{form a is bounded}, it follows that 
\begin{multline*}
|\< \mathcal N_{A,q}u, \overline h \>_{\p\Omega}|\le C\|u\|_{H^m(\Omega)}\|v_h\|_{H^m(\Omega)}\\
\le C\|\gamma u\|_{\prod_{j=0}^{m-1}H^{m-j-1/2}(\p \Omega)}\|h\|_{\prod_{j=0}^{m-1}H^{m-j-1/2}(\p \Omega)},
\end{multline*}
where 
$$
\|h\|_{\prod_{j=0}^{m-1}H^{m-j-1/2}(\p \Omega)}=(\|h_0\|_{H^{m-1/2}(\p\Omega)}^2+\cdots+\|h_{m-1}\|_{H^{1/2}(\p\Omega)}^2)^{1/2}
$$
is the product norm on the space $\prod_{j=0}^{m-1}H^{m-j-1/2}(\p \Omega)$. 
Here we have used the fact that the extension operator $\prod_{j=0}^{m-1}H^{m-j-1/2}(\p \Omega)\ni h\mapsto v_h\in H^m(\Omega)$ is bounded, again see \cite[Theorem 9.5, page 226]{Gr}. Hence, we have that $\mathcal{N}_{A,q}f$ belongs to $\big(\prod_{j=0}^{m-1}H^{m-j-1/2}(\p \Omega)\big)'= \prod_{j=0}^{m-1}H^{-m+j+1/2}(\p \Omega)$. The proof given above also shows that
$$
\mathcal N_{A,q}:\prod_{j=0}^{m-1}H^{m-j-1/2}(\p\Omega)\to\(\prod_{j=0}^{m-1}H^{m-j-1/2}(\p\Omega)\)'=\prod_{j=0}^{m-1}H^{-m+j+1/2}(\p\Omega)
$$
is bounded.

\section*{Acknowledgements}
The author thanks his advisor Professor Gunther Uhlmann for all his encouragement and support. Many thanks to Professor Katya Krupchyk whose unvaluable comments substantially improved the main result of the paper. The author gratefully acknowledges Professor Winfried Sickel for sharing his copy of the monograph \cite{RS}. Finally, the author is very grateful to Karthik Iyer for pointing out some mistakes in the previous version of the paper and for suggesting the ways to fix them. Specifically, he pointed out that $D_A$ becomes a bounded operator only for $m<n$ (see the proof of Proposition~\ref{m_q and D_A are bounded maps} for details) and that we need to assume little more regularity of the zeroth order perturbation in order to get uniqueness (for details see the last paragraph of Section~\ref{Proof}). The work of the author was partially supported by the National Science Foundation.

\end{document}